\documentclass{article}

\usepackage{graphicx}
\usepackage{amssymb}
\usepackage[pdftex]{hyperref}
\usepackage{xcolor}
\hypersetup{
	colorlinks,
	linkcolor={red!50!black},
	citecolor={blue!90!black},
	urlcolor={blue!80!black}
}
\usepackage{cite}
\usepackage[hmargin={2.5cm,2.5cm}, vmargin={3cm,3cm}, dvips]{geometry}

\usepackage{empheq}
\usepackage{fancyhdr}
\usepackage{float}
\definecolor{abbred}{RGB}{255,0,15}
\definecolor{vlg}{RGB}{242,242,242}

\newcommand{\R}{\mathbb{R}}

\newcommand{\E}{\boldsymbol{E}}

\newcommand{\x}{\boldsymbol{x}}
\newcommand{\y}{\boldsymbol{y}}

\renewcommand{\hat}{\widehat}

\usepackage{amsmath} 
\usepackage{amssymb} 
\usepackage{amsthm} 
\usepackage{array}
\usepackage{placeins}
\usepackage{enumerate}
\usepackage{esint}
\usepackage{graphics}
\usepackage{graphicx}
\usepackage{pifont}
\usepackage{tikz}
\usetikzlibrary{3d}
\usetikzlibrary{arrows,calc}
\usetikzlibrary{patterns}
\usetikzlibrary{shapes,positioning,shadows,trees}
\usetikzlibrary{arrows.meta,chains}

\usepackage{pgfplots}
\pgfplotsset{compat=1.15}
\usepackage{caption}
\usepackage{subcaption}
\usepackage{listings}
\usepackage[shortlabels]{enumitem}
\usepackage{multirow}
\usepackage{multicol}
\usepackage{wrapfig}

\usepackage{afterpage}
\usepackage{mathrsfs}
\usepackage{mathtools}
\usepackage{tabularx}

\newtheoremstyle{droit}
{}
{}
{\upshape}
{}
{\bfseries}
{}
{ }
{}
\newtheoremstyle{italique}
{}
{}
{\itshape}
{}
{\bfseries}
{}
{ }
{}
\theoremstyle{italique}
\newtheorem{theorem}{Theorem}[section]
\newtheorem{proposition}[theorem]{Proposition}
\newtheorem{lemma}[theorem]{Lemma}

\theoremstyle{droit}

\newtheorem{remark}[theorem]{Remark}

\newtheorem{assumption}[theorem]{Assumption}

\usepackage{mathtools}
\mathtoolsset{showonlyrefs=true}

\newcommand{\RN}[1]{%
	\textup{\uppercase\expandafter{\romannumeral#1}}%
}

\makeatletter
\newcommand{\vast}{\bBigg@{4}}
\newcommand{\Vast}{\bBigg@{5}}
\makeatother
\usepackage{geometry}
\usetikzlibrary{shapes,backgrounds}

\usepackage{algorithm,algcompatible}
\usepackage{algpseudocode}
\algnewcommand{\lst}{\texttt{lst}}
\algnewcommand{\slst}{\texttt{slst}}
\algnewcommand{\SEND}{\textbf{send}}
\newsavebox{\algleft}
\newsavebox{\algright}

\definecolor{darkgreen}{rgb}{0,0.4,0} 
\definecolor{darkbrown}{rgb}{0.5, 0.396, 0.09}

\definecolor{c1}{rgb}{0.0, 0.4196078431372549, 0.6431372549019608}
\definecolor{c2}{rgb}{1.0, 0.5019607843137255, 0.054901960784313725}
\definecolor{c3}{rgb}{0.6705882352941176, 0.6705882352941176,
	0.6705882352941176} \definecolor{c}{rgb}{0.34901960784313724, 0.34901960784313724, 0.34901960784313724}
\definecolor{c4}{rgb}{0.37254901960784315, 0.6196078431372549,
	0.8196078431372549} 
\definecolor{c5}{rgb}{0.5372549019607843, 0.5372549019607843,
	0.5372549019607843} 
\definecolor{c6}{rgb}{1.0, 0.7372549019607844, 0.4745098039215686}
\definecolor{c7}{rgb}{0.8117647058823529, 0.8117647058823529,
	0.8117647058823529}

\newsavebox{\imagebox}

\usepackage{tikz-3dplot}
\tikzset{declare function={
		vcrity(\ph,\th)=atan2(sin(\th)*sin(\ph),min(cos(\ph),-1/sqrt(2))*cos(\th));
		vcritz(\ph,\th)=\ph;
	},pics/ycylinder/.style={code={
			\tikzset{3d/cylinder/.cd,#1}
			\def\pv##1{\pgfkeysvalueof{/tikz/3d/cylinder/##1}}
			\pgfmathsetmacro{\vmin}{vcrity(\tdplotmainphi,\tdplotmaintheta)}
			\pgfmathsetmacro{\vmax}{\vmin-180}
			\path[3d/cylinder/mantle]
			let \p1=($(0,1,0)-(0,0,0)$),\n1={atan2(\y1,\x1)} in
			[shading angle=\n1]
			plot[variable=\t,domain=\vmin:\vmax,smooth]
			({\pv{r}*cos(\t)},0,{\pv{r}*sin(\t)})
			-- 
			plot[variable=\t,domain=\vmax:\vmin,smooth]
			({\pv{r}*cos(\t)},\pv{h},{\pv{r}*sin(\t)})
			--cycle;
			\pgfmathtruncatemacro{\itest}{sign(cos(\tdplotmainphi))}
			\ifnum\itest=-1
			\path[3d/cylinder/top] plot[variable=\t,domain=0:360,smooth cycle]
			({\pv{r}*cos(\t)},\pv{h},{\pv{r}*sin(\t)}) ;
			\fi
			\ifnum\itest=1
			\path[3d/cylinder/top] plot[variable=\t,domain=0:360,smooth cycle]
			({\pv{r}*cos(\t)},0,{\pv{r}*sin(\t)}) ;
			\fi
	}},3d/.cd,cylinder/.cd,r/.initial=1,h/.initial=1,
	mantle/.style={draw},top/.style={draw}}

\makeatletter
\def\customrevertcolormap#1{%
	\pgfplotsarraycopy{pgfpl@cm@#1}\to{custom@COPY}%
	\c@pgf@counta=0
	\c@pgf@countb=\pgfplotsarraysizeof{custom@COPY}\relax
	\c@pgf@countd=\c@pgf@countb
	\advance\c@pgf@countd by-1 %
	\pgfutil@loop
	\ifnum\c@pgf@counta<\c@pgf@countb
	\pgfplotsarrayselect{\c@pgf@counta}\of{custom@COPY}\to\pgfplots@loc@TMPa
	\pgfplotsarrayletentry\c@pgf@countd\of{pgfpl@cm@#1}=\pgfplots@loc@TMPa
	\advance\c@pgf@counta by1 %
	\advance\c@pgf@countd by-1 %
	\pgfutil@repeat
}%

\makeatother
\usepgfplotslibrary{colorbrewer}
\usepgfplotslibrary{fillbetween}
\usepgfplotslibrary{groupplots}

\makeatletter
\newtoks\pgf@ps@toks
\newcount\c@pgf@ps

\def\pgf@ps@sp{ }

\def\pgf@ps@esettoks#1{\edef\pgf@ps@tmp{#1}\pgf@ps@toks\expandafter{\pgf@ps@tmp}}

\def\pgf@ps@repop#1#2{%
	\c@pgf@countb=#2\relax%
	\def\pgf@ps@op{#1}%
	\def\pgf@ps@ops{}\pgf@ps@@repop}
\def\pgf@ps@@repop{%
	\ifnum\c@pgf@countb<1\relax%
	\else%
	\edef\pgf@ps@ops{\pgf@ps@op\pgf@ps@ops}%
	\advance\c@pgf@countb by-1\relax%
	\expandafter\pgf@ps@@repop%
	\fi%
}

\def\pgf@ps@generate@ps{%
	\c@pgf@counta=\pgf@ps@ncol\relax%
	\c@pgf@countb=\c@pgf@counta%
	\advance\c@pgf@countb by-1\relax%
	\pgf@ps@esettoks{ \noexpand\pgf@ps@interp{col@\the\c@pgf@counta}{col@\the\c@pgf@countb} }%
	\pgfmathloop
	\ifnum\c@pgf@counta<2\relax%
	\else%
	\c@pgf@countb=-\c@pgf@counta%
	\advance\c@pgf@countb by\pgf@ps@ncol\relax%
	\advance\c@pgf@counta by-1\relax%
	\pgf@ps@repop{pop\pgf@ps@sp}{\c@pgf@countb}%
	\c@pgf@countb=\c@pgf@counta%
	\advance\c@pgf@countb by-1\relax%
	\ifnum\c@pgf@countb=0\relax%
	\c@pgf@countb=\pgf@ps@ncol\relax%
	\fi%
	\pgf@ps@esettoks{ \the\c@pgf@counta\pgf@ps@sp eq { \pgf@ps@ops \noexpand\pgf@ps@interp{col@\the\c@pgf@counta}{col@\the\c@pgf@countb} }{ \the\pgf@ps@toks } ifelse}%
	\repeatpgfmathloop%
	\c@pgf@counta=\pgf@ps@ncol\relax%
	\advance\c@pgf@counta by-2\relax%
	\pgf@ps@repop{dup\pgf@ps@sp}{\c@pgf@counta}%
	\pgf@ps@esettoks{ \pgf@ps@ops \the\pgf@ps@toks }%
}

\def\pgf@ps@colorstorgb#1{%
	\c@pgf@ps=1\relax%
	\pgfutil@for\pgf@ps@:={#1}\do{%
		\pgf@ps@coltorgb{\pgf@ps@}{col@\the\c@pgf@ps}%
		\advance\c@pgf@ps by1\relax}%
}

\def\pgf@ps@coltorgb#1#2{%
	\edef\pgf@ps@marshal{\noexpand\pgfshadecolortorgb{#1}}%
	\expandafter\pgf@ps@marshal\expandafter{\csname#2\endcsname}%
}

\def\pgf@ps@rgb#1{\csname#1\endcsname}
\def\pgf@ps@interp#1#2{%
	\pgf@ps@rgb{#1red} mul exch \pgf@ps@rgb{#2red} mul add
	5 1 roll
	\pgf@ps@rgb{#1green} mul exch \pgf@ps@rgb{#2green} mul add
	3 1 roll
	\pgf@ps@rgb{#1blue} mul exch \pgf@ps@rgb{#2blue} mul add
}

\def\pgfdeclarecolorwheelshading#1#2#3{%
	\pgf@ps@getcols{#3}%
	\pgfmathparse{mod(#2+360/\pgf@ps@ncol-90,360)}%
	\pgf@x=\pgfmathresult pt\relax%
	\ifdim\pgf@x<0pt\relax%
	\advance\pgf@x by360pt\relax%
	\fi%
	\edef\pgf@ps@rot{\pgfmath@tonumber{\pgf@x}}%
	\pgf@ps@generate@ps%
	\pgf@ps@esettoks{%
		\noexpand\pgfdeclarefunctionalshading[#3]{#1}%
		{\noexpand\pgfpoint{-50bp}{-50bp}}{\noexpand\pgfpoint{50bp}{50bp}}%
		{\noexpand\pgf@ps@colorstorgb{#3}}%
		{%
			2 copy abs exch abs add 0.0001 ge { atan } { pop } ifelse
			\pgf@ps@rot\pgf@ps@sp add dup 360 ge { -360 add } { } ifelse
			360 \pgf@ps@ncol\pgf@ps@sp 
			div div dup floor dup 3 1 roll neg add dup neg 1 add exch
			2 copy 2 copy 7 -1 roll 1 add
			\the\pgf@ps@toks}}%
	\edef\pgf@ps@marshal{\the\pgf@ps@toks}%
	\pgf@ps@marshal}

\def\pgf@ps@getcols#1{%
	\c@pgf@ps=0\relax%
	\pgfutil@for\pgf@ps@:={#1}\do{\advance\c@pgf@ps by1}%
	\edef\pgf@ps@ncol{\the\c@pgf@ps}%
}
\pgfdeclarecolorwheelshading{rainbow}{-45}{c4,c,c2}

\begin{document}

\title{Towards analysis-aware geometry defeaturing for inception voltage predictions}

\author{Ondine Chanon$^{1}$\\ \\\vspace{-0.2cm}
  \parbox{\linewidth}{\centering\footnotesize{$^1$ ABB Switzerland Ltd, Corporate Research Center,\\
  Segelhofstrasse 1K, 5405 Baden-D\"{a}ttwil, Switzerland.\\
  \href{mailto:ondine.chanon@ch.abb.com}{ondine.chanon@ch.abb.com}.}
  }
}  
\date{June 2026}

\maketitle
\vspace{-0.8cm}
\noindent\rule{\linewidth}{0.4pt}
\thispagestyle{fancy}
\begin{abstract}
Geometry simplification (or \emph{defeaturing}) is routinely employed in numerical simulations of medium- and high-voltage equipment, not only to reduce computational costs but also to make meshing possible at all. However, the errors that this practice introduces in the predicted breakdown or inception voltages are currently neglected. This work presents a goal-oriented \textit{a posteriori} error estimation framework that aims at assessing such defeaturing errors in inception voltage computations.

The methodology combines a first-order approximation of the inception voltage error with dual-weighted residual estimators, providing an error bound that allows us to quantify the impact of defeaturing on the simulation results. The approach builds upon the streamer integral model for inception voltage prediction and uses the recently proposed certified goal-oriented analysis-aware defeaturing estimator of~\cite{weder2025certified} for elliptic PDEs. The first-order approximation of the inception voltage is a linear functional~$J$ of the background electric field whose definition involves a line integral along the critical field line, and is therefore only of low regularity. To meet the abstract regularity assumption of the goal-oriented theory, we introduce a Gaussian mollification of~$J$.
The methodology is illustrated on a pin--plate benchmark with a small protrusion of varying size and shape, using a single shared adaptive mesh and an adaptive coupling of the mollification width to the local mesh size in order to ensure that the defeaturing error dominates over the discretization and the regularization errors.

This study is a first step towards the application of analysis-aware defeaturing to an industrial application, mainly inception voltage predictions. While it operates under quite restrictive assumptions, it also establishes a framework that can be extended to more complex configurations, and which also has potential for application to other breakdown voltage prediction models and to other types of simulations.
\end{abstract}

\textit{Keywords}: Goal-oriented \textit{a posteriori} error estimation;
analysis-aware defeaturing;
geometry simplification;
inception voltage;
dual-weighted residual;
Gaussian mollification;
adaptive finite elements.

\section{Introduction}\label{s:intro}
The design of medium- and high-voltage equipment relies on numerical simulations to ensure safe operation. Modern designs contain numerous geometric features at various scales: sharp edges, rounded corners, mounting screws, surface textures, and manufacturing imperfections. When feasible, including all these features in numerical models leads to prohibitively expensive simulations because of meshing requirements and computational cost. Engineers therefore routinely simplify geometries by removing small features deemed negligible, a practice referred to in the literature as \emph{geometry simplification}, \emph{design simplification}, or \emph{defeaturing}; we use these three terms as synonyms throughout this work. Current industrial practice relies largely on user intuition to decide which features to retain or remove. This raises a critical question: how can we assess whether a given defeaturing is acceptable, and what error does it introduce in the predicted inception voltage? To answer this question, we need a systematic, quantitative framework for assessing defeaturing errors that allows engineers to identify which geometric features matter to obtain accurate simulation results.

A key quantity of interest in the design of medium- and high-voltage equipment is the inception voltage, which marks the threshold at which partial discharges occur in gas-insulated regions with non-negligible probability. Many simulation tools support inception voltage prediction by coupling background field computations with discharge modeling based on the streamer integral, see~\cite{vhvlab,hjortsam2017si,thesis:xeno}, but the errors introduced by geometry simplification are currently neither taken into account nor quantifiable. Accurate inception voltage predictions require detailed geometric models to capture the electric field distribution near critical electrode surfaces, and even very small geometric features may be critical.

The mathematical study of geometry simplification errors in elliptic PDEs has recently been developed in the so-called analysis-aware defeaturing literature. Energy-norm \textit{a posteriori} estimators that only depend on local computations on the simplified geometry and along the boundary of the removed feature have been developed in~\cite{paper1defeaturing,buffa2022adaptive,buffa2024equilibrated,paper3multifeature,weder2025analysis}, and have very recently been extended to certified goal-oriented estimators for linear functionals of the solution that satisfy abstract regularity assumptions, see~\cite{weder2025certified}. In parallel, goal-oriented \textit{a posteriori} error estimation through the dual-weighted residual (DWR) technique is by now a well-established methodology for controlling the discretization error in quantities of interest, see~\cite{becker2001optimal,bangerth2003adaptive,giles2002adjoint,prudhomme1999goal}. To the best of the author's knowledge, however, no quantitative framework that combines these ingredients to control geometry simplification errors in inception voltage predictions has been proposed so far.

This work develops such a framework by combining a perturbation analysis of the streamer integral model with the goal-oriented analysis-aware defeaturing estimator of~\cite{weder2025certified}. The inception voltage error caused by removing a small feature is first approximated to first order by a linear functional~$J$ of the background electric field, defined as an integral along the critical field line. This functional turns out to be of low regularity (only in $H^{-3/2-\epsilon}$, while the abstract goal-oriented theory of~\cite{weder2025certified} requires $H^{-1}$-regularity), and we address this difficulty by introducing a Gaussian mollification of~$J$. The methodology yields a goal-oriented defeaturing estimator that requires only computations on the simplified geometry and along the boundary of the removed feature, avoiding the need to mesh or solve the full detailed geometry. We further show, both theoretically and through numerical experiments, that the estimator captures the correct scaling of the inception voltage error with respect to the feature size. We also assess numerically its effectivity across features of varying shapes.

This study operates under quite restrictive assumptions and focuses on a simple but industrially relevant pin--plate two-electrode configuration to validate the methodology. To the best of our knowledge, this work is a first step towards the application of analysis-aware defeaturing to an industrial application, mainly inception voltage predictions. It establishes a framework that can be extended to more complex industrial geometries, and which could be applied to other breakdown voltage prediction models as well as to other types of simulations.

The manuscript is structured as follows. Section~\ref{s:models} reviews the inception voltage calculation based on the streamer integral model and introduces the working assumptions and notation. Section~\ref{s:framework} presents the error estimation framework, deriving both the first-order approximation of the inception voltage error and the associated goal-oriented defeaturing estimator, including the treatment of the regularity issue by Gaussian mollification. Section~\ref{s:numexample} validates the methodology through numerical experiments on a two-dimensional pin--plate configuration with a small protrusion, assessing the dependence of the estimator on both the feature size and shape. Section~\ref{s:conclusion} summarizes the findings and outlines directions for future work. Supporting proofs are collected in the Appendix, and the notation used throughout the manuscript is summarized in Table~\ref{tbl:notation}.

\begin{table}[H]
\begin{tabularx}{\textwidth}{l|X}
	Notation & Definition \\ \hline
    $\partial M$ & Boundary of $M$. If $M$ is unbounded, $\partial M$ does not include infinity.\\
	$\overline{M}$ & Closure of $M$.\\
	$\mathrm{int}(M)$ & Interior of $M$.\\
    $z\vert_{M}$ & Restriction of function $z$ to $M$.\\
	$\|z\|_{0,M}$ & Norm of $z$ in $L^2(M)$.\\
    $H^s(M)$ & Sobolev space of order $s$ on $M$, see~\cite{adams2003sobolev,grisvard}.\\
    $H^1_{0}(M)$ & Set of functions $z\in H^1(M)$ with zero trace on $\partial M$, i.e., such that $z=0$ on $\partial M$.\\
    $\mathcal F^*$ & Dual space of $\mathcal F$.\\
    $|\boldsymbol{v}|$ & Magnitude (or equivalently, $\ell^2$-norm or Euclidean norm) of $\boldsymbol{v}$.
\end{tabularx}
\caption{Notation used throughout the manuscript, where $s\in\R$, $M$ is an open $k$-dimensional manifold in $\R^3$, $k\le 3$, $\mathbf{v}$ is a vector, $z$ is a function, and $\mathcal F$ is a normed functional space. }\label{tbl:notation}
\end{table}

\section{Inception voltage calculation}\label{s:models}
In this section, we introduce the mathematical framework allowing us to compute the partial discharge inception voltage $\hat V$ for a given switchgear design. In Section~\ref{ss:streamerintegralmodel}, we introduce the general framework and we describe a commonly used criterion to determine if inception may occur from a given point $\x^\star$ on the boundary of the gas volume in the switchgear. We mostly use this section to introduce the main physical concepts and mathematical notations, and to make explicit all (functional) dependencies which will be needed in the rest of this manuscript. For more details on the physical modelling aspects, please refer for instance to~\cite{vhvlab, hjortsam2017si} and references therein. In Section~\ref{ss:modelwithreducingassumptions}, we make some assumptions to reduce the problem to a simpler two-electrode geometry without insulators nor floating conductors. This case will be used in the rest of this document to analyse the geometry simplification error introduced when one computes the inception voltage $\hat V_0$ for a simplified switchgear design, rather than for the exact original one.

\subsection{Streamer integral model of inception}\label{ss:streamerintegralmodel}
Let
\begin{itemize}
    \item $D^G$ be the grounded electrode\footnotemark[1],
    \item $D^{V}$ be the high voltage electrode\footnotemark[1] which does not touch the grounded electrode\footnotemark[2],
    \item $D^F_j$ be $J$ floating conductors\footnotemark[1], with $j=1,\ldots,J$, and let $D^F$ be their union\footnotemark[3],
    \item $D^I_i$ be $I$ insulated volumes\footnotemark[1] with constant (positive) electric permittivity $\varepsilon_i$, with $i=1,\ldots,I$, and let $D^I$ be their union\footnotemark[3],
    \item $D^g$ be the gas volume filling the rest of the space\footnotemark[4], whose gas medium has constant (positive) electric permittivity~$\varepsilon_g$.
\end{itemize}
Figure~\ref{fig:illustrationdomains} gives an illustration of this notation.
\footnotetext[1]{open, bounded, and smooth subdomains of $\R^3$.}
\footnotetext[2]{i.e., $\overline{D^V}\cap\overline{D^G}=\emptyset$.}
\footnotetext[3]{i.e., $D^F := \mathrm{int}\left(\bigcup_{j=1}^J \overline{D_F^j}\right)$ and $D^I := \mathrm{int}\left(\bigcup_{i=1}^I \overline{D_I^i}\right)$.}
\footnotetext[4]{i.e., $D^g := \R^3 \setminus \left( \overline{D^G} \cup \overline{D^{V}} \cup \overline{D^I} \cup \overline{D^F} \right)$, and thus it is an open, unbounded, and not necessarily Lipschitz domain.}

\begin{figure}
\begin{center}
\begin{tikzpicture}[scale=1.2]
\definecolor{electrodecolor}{RGB}{200,100,100}
\fill[pattern=north east lines, pattern color=black!50] (0.2,0.2) rectangle (9.8,5.8);
\node at (8.5,5) {\color{black!60}$D^g$};
\fill[gray!40] (0.6,3.5) rectangle (5,5);
\node at (2.8,4.25) {$D^I_1$};
\fill[gray!40] (6,1.5) circle (0.6);
\node at (6,1.5) {$D^I_2$};
\begin{scope}
  \path[fill=electrodecolor!60, draw=none]
    (1.6,3.0)                            
    ++(60:0.95)                          
    arc[start angle=60, end angle=-60, radius=0.95]   
    -- ++(120:0.25)                      
    arc[start angle=-60, end angle=60, radius=0.70]   
    -- cycle;
  \node at (2.65,2.4) {\color{electrodecolor}$D^F_1$};
\end{scope}
\begin{scope}
  \path[fill=electrodecolor!60, draw=none]
    (7.8,3.0)                            
    ++(120:0.95)                         
    arc[start angle=120, end angle=240, radius=0.95]  
    -- ++(60:0.25)                       
    arc[start angle=240, end angle=120, radius=0.70]  
    -- cycle;
  \node at (6.7,2.4) {\color{electrodecolor}$D^F_2$};
\end{scope}
\fill[electrodecolor!60] (0.6,1.5) rectangle (1.6,4.5);
\node at (1.1,3) {$D^G$};
\fill[electrodecolor!60] (8.4,3.6) arc[start angle=90,end angle=270,radius=0.6] -- cycle;
\fill[electrodecolor!60] (8.4, 2.4) rectangle (9.5, 3.6);
\node at (8.7,3) {$D^{V}$};
\end{tikzpicture}
\end{center}
\caption{2D-illustration of the domains: gas volume $D^g$ (hashed), insulators $D_1^I$ and $D^1_2$ (in gray), grounded electrode $D^G$, high voltage electrode $D^{V}$, and floating electrodes $D^F_1$ and $D^F_2$ (conductors are in red).}\label{fig:illustrationdomains}
\end{figure}

From Faraday's law of induction, the \textit{(background) electric field} $\overline{\E}:\R^3\to\R$ satisfies $\nabla \times \overline{\E} = \mathbf 0$ under the magneto-static assumption. Therefore, there exists a \textit{background electric scalar potential} $\overline{\varphi}:\R^3\to\R$ such that
\begin{equation}\label{eq:defbackgroundelectricfield}
    \overline{\E} = -\nabla \overline{\varphi}.
\end{equation}
Moreover, assuming that the media are net-neutral (i.e., before any electric breakdown happens) with electric permittivity $\varepsilon$, then from Gauss' law for electricity, $\nabla \cdot (\varepsilon \overline{\E}) = 0$. Consequently, combining this with equation~\eqref{eq:defbackgroundelectricfield},
\begin{equation}
    -\nabla\cdot(\varepsilon \nabla \overline{\varphi}) = 0.
\end{equation}
We furthermore assume that both electrodes and all the floating conductors are perfect conductors and/or are not carrying any current, meaning that $\overline{\varphi}$ is constant on $D^G$, $D^{V}$ and $D^F_j$ for each $j=1,\ldots,J$. Since $\overline{\varphi}$ is only well-defined up to a constant from~\eqref{eq:defbackgroundelectricfield}, we assume without loss of generality that it is equal to zero at the grounded electrode $D^G$. In addition, we normalize the problem by the electric potential at $D^{V}$ (or equivalently by the voltage between the two electrodes). 

To summarize, if we define $\varepsilon:D^g\cup D^I\to\R$ as
\begin{equation}
    \varepsilon(x) = \begin{cases}
        \varepsilon_g & \text{for all }x\in D^g,\\
        \varepsilon_i & \text{for all }x\in D^I_i \text{ and all }i=1,\ldots,I,\\
    \end{cases}
\end{equation}
then the background electric potential $\overline{\varphi}$ satisfies the problem
{\mathtoolsset{showonlyrefs=false}
\begin{subequations}
    \begin{empheq}[left=\empheqlbrace]{align}
        &-\nabla\cdot(\varepsilon \nabla \overline{\varphi}) = 0 && \hspace{-4cm}\text{ in } D^g\cup D^I, \label{eq:poissonmain}\\
        &\nabla \overline{\varphi} = 0 && \hspace{-4cm}\text{ in }  D^F,\label{eq:poissonDF}\\
        &\overline{\varphi} = 0 && \hspace{-4cm}\text{ in } D^G, \label{eq:poissonDG}\\
        &\overline{\varphi} = 1 && \hspace{-4cm}\text{ in } D^{V},\label{eq:poissonDV}\\
        &\left[\overline{\varphi}\right] = \left[\varepsilon \nabla\overline{\varphi}\cdot \mathbf{n}\right] = 0 && \hspace{-4cm}\text{ on } \partial D^g\cap\partial D^I,\label{eq:poissoncontinuity}\\
        &\displaystyle\int_{\partial D_j^F} \left[\varepsilon \nabla \overline{\varphi}\cdot \mathbf{n}_j\right] \,\mathrm ds = 0 && \hspace{-4cm}\text{ for each } j=1,\ldots,J,\label{eq:poissonclosure}
    \end{empheq}
\end{subequations}}
where \begin{itemize}
    \item $[\cdot]$ denotes the jump operator for a discontinuous function across an interface, defined as the difference between the function values on both sides,
    \item $\mathbf{n}$ denotes the unit normal vector pointing outward from $D^g$ on its boundary, 
    \item $\mathbf{n}_j$ denotes the unit normal vector pointing outward from $D_j^F$ on its boundary. 
\end{itemize}
Equation~\eqref{eq:poissoncontinuity} ensures the continuity of the background electric potential and corresponding field. Moreover, equation~\eqref{eq:poissonclosure} is a required (scalar) closure equation, stating that the floating conductors have no net charges, i.e., they are initially uncharged and have no connection to external circuits. Note that the domain in which Gauss' law is solved is made of the gas volume $D^g$ and the insulated volumes~$D^I$.

Since $\varepsilon_g$ and $\varepsilon_i$ are constant for all $i=1,\ldots,I$, note that $\overline{\varphi}$ is harmonic in $D^g$ and in $D^I_i$ for all $i=1,\ldots,I$ (but not in their union). Consequently, by the maximum principle, both the minimum and the maximum of $\overline{\varphi}$ are attained at the boundary of one of these domains. Moreover, from Lemma~\ref{lemma:subharmonicgradient}, $\left|\nabla \overline{\varphi}\right|=\left|\overline{\E}\right|$ is sub-harmonic in each domain. Therefore, by the maximum principle again, the maximum of the electric field magnitude $\left|\overline{\E}\right|$ is also attained on one of these boundaries.

Remark that both problem~\eqref{eq:poissonmain}--\eqref{eq:poissonclosure} and the functional dependency of $\overline{\E}$ on $\overline{\varphi}$ from~\eqref{eq:defbackgroundelectricfield} are linear. Therefore, let us call \textit{electric field} (resp. \textit{electric potential}) any field $\E:\R^3\to\R$ (resp. $\varphi:\R^3\to\R$) obtained from the background electric field (resp. background electric potential) re-scaled by a positive scalar $\omega$, i.e.,
\begin{equation} \label{eq:electricfieldpotentialrelation}
    \E := \omega\overline{\E} = -\omega\nabla\overline{\varphi} =: -\nabla \varphi.
\end{equation}
Note that for all $\x\in D^{V}$, since $\overline{\varphi}(\x)=1$, then $\varphi(\x)=\omega$ for all scalars $\omega$. In other words, when problem~\eqref{eq:poissonmain}--\eqref{eq:poissonclosure} is scaled by $\omega$, then the voltage between the two electrodes is equal to $\omega$.\\

In the following, our objective is to determine the \textit{inception voltage}, i.e., the minimum voltage required between the two electrodes for inception to occur in $D^g$. To precisely define it, we first need to introduce the notion of \textit{electric field line} restricted to the gas volume $D^g$. An \textit{electric field line} originating at a given point $\x^\star\in\partial D^g$ and restricted to $\overline{D^g}$, denoted by $\nu\subset\overline{D^g}$, is the curve $\x(t)$ transported by the electric field $\pm\overline{\E}$, where $t$ parameterizes the curve in both directions from $\x^\star$. At insulator boundaries, the field line follows the tangential electric field component. 

More rigorously, let $\overline{\E}^\mathbf{t} := \overline{\E}-(\overline{\E}\cdot\mathbf{n})\mathbf{n}$ be the tangential part of $\overline{\E}$ on $\partial D^g$. Then the electric field line $\nu$ is defined by
\begin{align}\label{eq:deffieldlines}
    \nu := \Bigg\{ \x(t)\in\overline{D^g}: &\,t\in\R, \x(0) = \x^\star, \\
        &\displaystyle\frac{\mathrm d \x}{\mathrm d t}(t) = \displaystyle\frac{\overline{\E}\big(\x(t)\big)}{\big|\overline{\E}\big(\x(t)\big)\big|} && \text{if } \x(t)\not\in\bigcup_{i=1}^I\partial D^I_i, \text{ or if } \x(t)\in\bigcup_{i=1}^I\partial D^I_i \text{ and } \mathrm{sign}(t)\overline{\E}\big(\x(t)\big)\cdot\mathbf{n}< 0,\\
        &\displaystyle\frac{\mathrm d \x}{\mathrm d t}(t) = \displaystyle\frac{\overline{\E}^{\mathbf{t}}\big(\x(t)\big)}{\big|\overline{\E}^{\mathbf{t}}\big(\x(t)\big)\big|} && \text{otherwise, i.e., if } \x(t)\in\bigcup_{i=1}^I\partial D^I_i \text{ and } \mathrm{sign}(t)\overline{\E}\big(\x(t)\big)\cdot\mathbf{n}\geq 0 \qquad\;\Bigg\}.
\end{align}
Note that 
\begin{itemize}
    \item to highlight the dependence of the electric field line on the inception point $\x^\star$ and the background electric field $\overline{\E}$, we write $\nu = \nu(\x^\star,\overline{\E})$,
    \item since $\overline{\E}=\boldsymbol{0}$ in the conductors (i.e., in the electrodes and in the floating conductors), the field lines stop as soon as they hit the conductors\footnotemark[5];
    \footnotetext[5]{i.e., whenever $\x(t)\in\partial D^G\cup\partial D^V\cup \bigcup_{j=1}^J \partial D^F_j$ and sign$(t)\overline{\E}\big(\x(t)\big)\cdot \mathbf{n}>0$ for some $t\neq0$.}
    \item since we consider both positive and negative values of $t$, the field lines propagate in $D^g$ in two directions: in the one of (the tangential component of) $\overline{\E}$, and in the one of (the tangential component of) $-\overline{\E}$. Due to the previous remark, a field line initiated at some $\x^\star$ belonging to the boundary of a conductor will only propagate in one of the two directions, depending on the sign of $\overline{\E}(\x^\star)\cdot\mathbf{n}$. Instead, a field line initiated at some $\x^\star$ belonging to the boundary of an insulator will usually propagate in both directions;
    \item since the transport direction $\overline{\E}$ is normalized, the length of the field line from~$\x^\star$ to~$\x(t)$ is equal to $|t|$;
    \item if we define $\E^{\mathbf{t}}$ similarly as $\overline{\E}^{\mathbf{t}}$ but on the scaled electric field, then
        \begin{equation} \label{eq:fieldlinesindepfromscaling}
            \displaystyle\frac{\overline{\E}}{\big|\overline{\E}\big|}=\frac{\E}{\big|\E\big|} \quad \text{ and } \quad \displaystyle\frac{\overline{\E}^{\mathbf{t}}}{\big|\overline{\E}^{\mathbf{t}}\big|}=\frac{\E^{\mathbf{t}}}{\left|\E^{\mathbf{t}}\right|},
        \end{equation}
        and thus the electric field lines are independent from any scaling of the electric potential. 
\end{itemize}

Now, let $\alpha_\text{eff}:\R^+\to\R$ be the effective ionization coefficient function known \textit{a priori}, which takes as argument a local electric field strength (i.e., the norm of the electric field at a given point). This quantity corresponds to the net number of free electrons generated per unit path length, representing the difference between the rate of impact ionization and the rate of electron attachment.
Moreover, let $E_c$ be the \textit{a priori} known magnitude of the \textit{critical electric field}, i.e., the minimal field magnitude from which an electron avalanche can be created. Then we assume that 
\footnotetext[6]{This assumption can be straightforwardly relaxed to the case in which $\alpha_\text{eff}(\xi)$ is continuously differentiable for almost every $\xi>E_c$, i.e., except on a subset of measure zero.}
\begin{itemize}
    \item $\alpha_\text{eff}$ is a continuous function, 
    \item for $\xi>E_c$, $\alpha_\text{eff}(\xi)$ is continuously differentiable\footnotemark[6],
    \item for $\xi>E_c$, $\alpha_\text{eff}(\xi)$ is monotonously increasing, i.e., $\alpha_\text{eff}'(\xi)>0$; in all analyzed practical applications, this assumption is indeed satisfied experimentally,
    \item $\alpha_\text{eff}(\xi)=0$ for $0 \leq \xi \leq E_c$; this assumption ensures that the parts of $\nu$ where the electric field magnitude is below $E_c$ do not contribute to the streamer integral $S$ defined below.
\end{itemize}

Then, let $T^+$ and $T^-$ be positive real numbers or infinity, and define the \textit{streamer integral} $S$ to be the scalar
\begin{equation}
    S = S(\omega, \overline{\E}, \nu, T^-, T^+) := \int_{-T^-}^{T^+} \alpha_\text{eff}\big(\big|\omega\overline{\E}\big(\x(t)\big)\big|\big) \,\mathrm dt,
\end{equation}
where $\x(t)$ are the points along the field line $\nu$. Physically, $S$ corresponds to the size of the electron avalanche created by the electric field along the field line $\nu$. 

A common criterion to determine if inception may occur is the following. Let $K_c$ be the experimentally determined (positive) \textit{streamer constant}. Then inception from point $\x^\star$ is assumed to occur when a voltage $\omega$ is applied between the two electrodes if there exist $T^-$ and $T^+$ such that 
\begin{subequations}
\begin{align}
    &\left|\omega\overline{\E}\big(\x(t)\big)\right|\geq E_c \quad \text{ for all } t\in(-T^-, T^+), \label{eq:inceptioncriteriaa}\\
    \text{and}\quad  &S\big(\omega, \overline{\E}, \nu(\x^\star, \overline{\E}), T^-, T^+\big) \geq K_c.\label{eq:inceptioncriteriab}
\end{align}
\end{subequations}
Physically, this means that inception occurs if the electric field magnitude exceeds the critical electric field over some region of the gas domain, and if the electron avalanche created by the electric field reaches the critical size characterized by $K_c$ needed to become self-sustaining\footnotemark[7].
\footnotetext[7]{More precisely, we require that $\left|\omega\overline{\E}\big(\x(t)\big)\right|>E_c$ for almost every $t\in(-T^-, T^+)$, i.e., except on a subset of measure zero, together with condition~\eqref{eq:inceptioncriteriab}. However, since $\overline{\E}$ is the gradient of a harmonic function and $\alpha_\text{eff}$ is continuous, then our requirements are equivalent to conditions~\eqref{eq:inceptioncriteriaa}--\eqref{eq:inceptioncriteriab}.}

Recall that in the gas volume $D^g$, the maximum of $|\overline{\E}|$ (and thus also of $|\omega\overline{\E}|$) is attained at the boundary $\partial D^g$. Thus for inception to occur, it is enough to have one point $\x^\star\in \partial D^g$ such that $|\omega\overline{\E}(\x^\star)|\geq E_c$, and such that~\eqref{eq:inceptioncriteriaa} and~\eqref{eq:inceptioncriteriab} are satisfied. The inception voltage $\hat V$ is then defined to be the smallest voltage $\omega$ satisfying this criterion.

\subsection{Inception voltage computation under simplifying assumptions} \label{ss:modelwithreducingassumptions}
As highlighted above, the physical problem tackled in this work is complex. In order to make a first step towards a complete analysis of the geometry simplification error, we therefore make a simplifying assumption:
\begin{assumption}\label{as:2elecgeom}
    We consider a two-electrode geometry without insulators nor floating conductors, i.e., $D^I=\emptyset$ and $D^F=\emptyset$. 
\end{assumption}
Under this assumption, problem~\eqref{eq:poissonmain}--\eqref{eq:poissonclosure} reduces to the following simple constant-coefficient Laplace problem in~$D^g$ with piecewise constant Dirichlet boundary conditions:
\begin{equation} \label{eq:poissonproblemwithassumptions}
    \begin{cases}
        -\nabla\cdot(\varepsilon_g \nabla \overline{\varphi}) = 0 & \text{ in } D^g, \text{ and thus } -\Delta \overline{\varphi}=0 \text{ in } D^g,\\
        \overline{\varphi} = 0 & \text{ in } D^G,\\
        \overline{\varphi} = 1 & \text{ in } D^{V}.
    \end{cases}
\end{equation}
Moreover, we have $D^g = \R^3 \setminus \left( \overline{D^G} \cup \overline{D^{V}} \right)$, and since $D^G$ and $D^V$ are smooth and non-intersecting, then $D^g$ is also smooth. Consequently, $\overline{\varphi}$ is also smooth in $D^G$. The analysis in this work requires $\overline{\varphi} \in H^{2+\epsilon}(D^g)$ (see Section~\ref{ss:1storderapprox}). By elliptic regularity, the minimal regularity assumption on the domain required for this work is thus $D^g$ to be $C^3$-regular.

Furthermore, under Assumption~\ref{as:2elecgeom}, the gas volume boundary $\partial D^g$ corresponds to the boundaries of the electrodes $\partial D^V\cup\partial D^G$. Therefore, the field lines $\nu$ originating at some point $\x^\star\in\partial D^g$ only propagate in one direction; i.e., either in the direction of $\overline{\E}$ and thus we can take $T^-=0$, or in the direction of $-\overline{\E}$ and thus we can take $T^+=0$. Consequently, the streamer integral can be expressed as
\begin{equation}
    S(\omega, \overline{\E}, \nu, T) = \int_{0}^{T} \alpha_\text{eff}\big(\big|\omega\overline{\E}\big(\x(\,\tilde t\,)\big)\big|\big) \,\mathrm dt, \;\text{ where } \begin{cases}
        T = T^+, \, \tilde t = t &\text{ if } T^-=0,\\
        T = T^-, \tilde t = -t &\text{ if } T^+=0.
    \end{cases}
\end{equation}
Without loss of generality, let us assume in the following that propagation is always done in the direction of $\overline{\E}$ and thus we take $T^-=0$ and consider $T=T^+$. Consequently,
\begin{equation}\label{eq:defS}
    S(\omega, \overline{\E}, \nu, T) = \int_{0}^{T} \alpha_\text{eff}\big(\big|\omega\overline{\E}\big(\x(t)\big)\big|\big) \,\mathrm dt.
\end{equation}
The steps needed to compute the streamer integral $S$ are summarized in the diagram of Figure~\ref{fig:forwardpbS}.\\

\begin{figure}
    \begin{center}
        \begin{tikzpicture}[
        start chain = going right,
        node distance=7mm,
        block/.style={shape=rectangle, draw,
            inner sep=1mm, align=center,
            minimum height=5mm, minimum width=25mm}] 
        \node[block] (n1) {(Insulators, floating\\conductors and)\\Gas domains};
        \node[block, below=2mm of n1] (n1bis) {Boundary\\conditions};
        \node[block, right=of n1] (n1half) at ($(n1.east)!0.5!(n1.east) - (0,5mm)$) {Background\\electric\\potential $\overline{\varphi}$};
        \node[block, right=of n1half] (n2r1) {Background\\electric field $\overline{\E}$};
        \node[block, above=3mm of n2r1] (n2r2) {Point $\x^\star$};
        \node[block, right=of n2r1] (n3r1) at ($(n2r1.east)!0.5!(n2r2.east) + (4mm,0)$) {Field line $\nu$};
        \node[block, below=6mm of n3r1] (n3r2) {$\omega$};
        \node[block, below=2mm of n3r2] (n3r3) {$T$};
        \node[block, right=10mm of n3r2] (n4) at ($(n3r2.east)!0.5!(n3r3.east) + (0mm,5mm)$) {Streamer \\integral $S$};
        
        \draw[->] (n1.east) -- ($(n1half.west) + (0,1mm)$);
        \draw[->] (n1bis.east) -- ($(n1half.west) + (0,-1mm)$);
        \draw[->] (n1half.east) -- (n2r1.west);
        \draw[->] ($(n2r1.east) + (0,1mm)$) -- ($(n3r1.west) + (0,-1mm)$);
        \draw[->] ($(n2r1.east) + (0,0mm)$) -- ($(n4.west) + (0,1mm)$);
        \draw[->] (n2r2.east) -- ($(n3r1.west) + (0,1mm)$);
        \draw[->] (n3r1.east) -- ($(n4.west) + (0,3mm)$);
        \draw[->] (n3r2.east) -- ($(n4.west) + (0,-1mm)$);
        \draw[->] (n3r3.east) -- ($(n4.west) + (0,-3mm)$);
        \end{tikzpicture}
        \caption{Full forward problem: computation of the streamer integral $S$.}\label{fig:forwardpbS}
    \end{center}
\end{figure}

Now, for all points $\x^\star\in \partial D^g$ and for all voltages $\omega$, let $T^\mathrm{\partial}$ be the length of the full field line, i.e., the parameter $t$ at which the field line hits the opposite electrode:
\begin{equation}\label{eq:Tbd}
    T^\mathrm{\partial} := \min\left\{t>0: \x(t)\in\partial D^g\right\}.
\end{equation}
Then, let $T^c$ be the length of the portion of the field line along which the electric field magnitude is supercritical, i.e.,
\begin{equation}\label{eq:Tmax}
    T^c := \begin{cases}
        0 & \text{ if } \big|\omega\overline{\E}(\x^\star)\big|< E_c;\\
        \min\left\{t\in(0,T^\partial): \big|\omega\overline{\E}\big(\x(t)\big)\big| = E_c, \text{ and }\right. & \\
        \hspace{2.75cm} \left. t \text{ is not a local minimum of }\big|\omega\overline{\E}\big(\x(t)\big)\big|\right\} & \text{ otherwise, and if it exists};\\
        T^\mathrm{\partial} &\text{ otherwise.}
    \end{cases}
\end{equation} 
In the first case, the magnitude of the electric field at the field line's orign $\x^\star$ is not supercritical. In the last case, the electric field magnitude is supercritical everywhere along the field line. The second case corresponds to the intermediate situation.
That is, the first criterion for inception~\eqref{eq:inceptioncriteriaa} can only be verified on a subset of the interval $(0,T^c)$. 

Moreover, we call $\hat T = \hat T\big(\omega, \overline{\E}, \nu(\x^\star, \overline{\E})\big)$ the length of the critical portion of the field line, i.e., of the portion of the field line at which the electron avalanche reaches the critical size $K_c$, if it ever reaches it. That is, since $\alpha_\text{eff}$ is positive in the interval $(0,T^c)$, then 
\begin{itemize}
    \item either $S\big(\omega, \overline{\E}, \nu, T^c\big) < K_c$, i.e., the second criterion for inception~\eqref{eq:inceptioncriteriab} is never verified and there cannot be inception; in this case, we define $\hat T := 0$ and thus $S\Big(\omega, \overline{\E}, \nu, \hat T\Big)=0$;
    \item or there exists $\hat T \in (0, T^c]$ such that 
        \begin{equation}\label{eq:SIeqKc}
            S\Big(\omega, \overline{\E}, \nu, \hat T\Big) = K_c.
        \end{equation}
\end{itemize}
Note that we always have $\hat T \leq T^c \leq T^\partial$.

Figure~\ref{fig:alphaplots} illustrates different scenarios occurring in relevant applications, helping to build intuition about $T^c$ and~$\hat T$. The first scenario represents a homogeneous electric field, as in a two parallel-plate electrode configuration. In this case, if the electric field is supercritical in $\x^\star$, then it is supercritical everywhere and thus $T^c=T^\partial$. The second scenario represents a rapidly decaying electric field, as in a pin-plate electrode configuration. And the third scenario represents a more complex electrode configuration, for example, two facing electrodes each creating strong localized electric fields.\\

\begin{figure}
    \centering
    \begin{subfigure}[t]{0.32\linewidth}
        \centering
        \begin{tikzpicture}
            \begin{axis}[
                name=plot1,
                width=5cm,
                height=4cm,
                xlabel={$t$},
                ylabel={$\alpha_{\mathrm{eff}}(|\E|)$},
                xmin=0, xmax=1,
                ymin=0, ymax=1,
                grid=major,
                grid style={dashed, gray!30},
                xtick={0,0.2,0.4,0.6,0.8,1},
                ytick=\empty,
                axis lines=left,
                tick label style={font=\small},
                label style={font=\small},
            ]
            \addplot[draw=none, fill=gray!30, domain=0:0.6] {0.5} \closedcycle;
            \addplot[abbred, thick,domain=0:1] {0.5};
            \addplot[black] coordinates {(0.6,0) (0.6,1)};
            \end{axis}
        \end{tikzpicture}
        \caption{Homogeneous electric field for which $T^c=1=T^\partial$.}\label{sfig:homogeneous}
    \end{subfigure}
    \hfill
    \begin{subfigure}[t]{0.32\linewidth}
        \centering
        \begin{tikzpicture}
            \begin{axis}[
                name=plot2,
                at={($(plot1.east)+(1cm,0)$)},
                anchor=west,
                width=5cm,
                height=4cm,
                xlabel={$t$},
                ylabel={$\alpha_{\mathrm{eff}}(|\E|)$},
                xmin=0, xmax=1,
                ymin=0, ymax=1,
                grid=major,
                grid style={dashed, gray!30},
                xtick={0,0.2,0.4,0.6,0.8,1},
                ytick=\empty,
                axis lines=left,
                tick label style={font=\small},
                label style={font=\small},
            ]
            \addplot[draw=none, fill=gray!30, smooth, samples=100, domain=0:0.6] 
                {7*exp(-5*(x/2+0.38))-7*exp(-5*(0.4+0.38)} \closedcycle;
            \addplot[abbred, thick, smooth, samples=100, domain=0:0.8] 
                {7*exp(-5*(x/2+0.38))-7*exp(-5*(0.4+0.38)};
            \addplot[abbred, thick, domain=0.8:1] {0};
            \addplot[black] coordinates {(0.6,0) (0.6,1)};
            \end{axis}
        \end{tikzpicture}
        \caption{Rapidly decaying electric field for which $T^c=0.8$, $T^\partial = 1$.}
    \end{subfigure}
    \hfill
    \begin{subfigure}[t]{0.32\linewidth}
        \centering
        \begin{tikzpicture}
            \begin{axis}[
                name=plot3,
                at={($(plot2.east)+(1cm,0)$)},
                anchor=west,
                width=5cm,
                height=4cm,
                xlabel={$t$},
                ylabel={$\alpha_{\mathrm{eff}}(|\E|)$},
                xmin=0, xmax=1,
                ymin=0, ymax=1,
                grid=major,
                grid style={dashed, gray!30},
                xtick={0,0.2,0.4,0.6,0.8,1},
                ytick=\empty,
                axis lines=left,
                tick label style={font=\small},
                label style={font=\small},
            ]
            \addplot[draw=none, fill=gray!30, smooth, samples=200, domain=0:0.6] 
                {0.4*(cos(deg(2*pi*x)) + 1)} \closedcycle;
            \addplot[abbred, thick, smooth, samples=200, domain=0:1] 
                {0.4*(cos(deg(2*pi*x)) + 1)};
            \addplot[black] coordinates {(0.6,0) (0.6,1)};
            \end{axis}
        \end{tikzpicture}
        \caption{Complex electric field for which $T^c=1=T^\partial$.}
    \end{subfigure}
    \caption{Effective ionization coefficient along a given field line in three different scenarios for which inception may occur. The size of the gray areas correspond to the value of the streamer integral when it is equal to~$K_c$, leading to $\hat T=0.6$ in all three cases. In~(a) and in~(b), inception may still occur under a lower voltage, but not in~(c).}
    \label{fig:alphaplots}
\end{figure}

For any $\x^\star\in\partial D^g$, we define the \textit{inception scaling} $\hat \omega$ as the smallest positive constant such that the streamer integral satisfies relation~\eqref{eq:SIeqKc}, that is,
\begin{equation}\label{eq:defomegahat}
    \hat{\omega} := \min\left\{ \omega>0 : S(\omega, \overline{\E}, \nu, \hat{T})=K_c \right\}.
\end{equation}
Note that $\hat\omega$ depends only on $\overline{\E}$ and $\nu$, i.e., $\hat{\omega}=\hat{\omega}\big(\overline{\E}, \nu(\x^\star, \overline{\E})\big)$. The diagram of Figure~\ref{fig:backwardpbTomega} illustrates both the backwards problem solved to find the value of $\hat T$, and the minimization problem solved to find the value of $\hat\omega$.\\

\begin{figure}
    \begin{center}
        \begin{tikzpicture}[
        start chain = going right,
        node distance=7mm,
        block/.style={shape=rectangle, draw,
            inner sep=1mm, align=center,
            minimum height=5mm, minimum width=25mm}] 
        \node[block] (n3r1) {Field line $\nu$};
        \node[block, below=2mm of n3r1] (n2r1) {Background \\electric field $\overline{\E}$};
        \node[block, below=2mm of n2r1] (n3r2) {\color{blue}$\hat\omega=\min \omega$};
        \node[block, below=2mm of n3r2] (n3r3) {\color{blue}$\hat T$};
        \node[block] (n4) at ($(n3r2.east)!0.5!(n3r3.east) + (30mm,10mm)$) {Streamer \\integral $S{\,\color{blue}=K_c}$};

        \draw[->,densely dotted] ($(n3r1.west) + (-0.8cm,0)$) -- (n3r1.west);
        \draw[->,densely dotted] ($(n2r1.west) + (-0.8cm,0)$) -- (n2r1.west);
        \draw[->] (n3r1.east) -- ($(n4.west) + (0,3mm)$);
        \draw[->] ($(n2r1.east) + (0,1mm)$) -- ($(n4.west) + (0,1mm)$);
        \draw[->,blue] (n3r2.east) -- ($(n4.west) + (0,-1mm)$);
        \draw[<-,blue,densely dashed] (n3r3.east) -- ($(n4.west) + (0,-3mm)$);
        \end{tikzpicture}
        \caption{Backward problem: finding $\hat T \in (0, T^c]$ such that the streamer integral $S$ is equal to the streamer constant $K_c$. The minimization problem defining $\hat\omega$ corresponds to finding the minimal value of $\omega$ for which the backward problem has a solution.} \label{fig:backwardpbTomega}
    \end{center}
\end{figure}

Finally, the \textit{inception voltage} $\hat{V}\in\R$ is determined as
\begin{equation} \label{eq:definceptionvoltage}
    \hat{V} := \min_{\x^\star\in \partial D^g} \hat{\omega}\big(\overline{\E}, \nu(\x^\star, \overline{\E})\big).
\end{equation}
Note that $\partial D^g$ corresponds to the gas-solid interfaces. However, in practice, the minimization is only done over the \textit{critical set} defined as
\begin{equation}\label{eq:criticalset}
    X^\star(\omega_u) := \left\{ \x^\star \in \partial D^g :\, |\omega_u\overline{\E}(\x^\star)|>E_c\right\},
\end{equation}
where $\omega_u$ is a user pre-defined positive voltage scaling.

\section{Goal-oriented defeaturing error estimation for inception voltages}\label{s:framework}
In this section, we suppose that we are interested in computing the inception voltage $\hat V$ for a given switchgear design $\Upsilon$, but that, because of software or computational cost restrictions, we can only compute the inception voltage $\hat V_0$ for a simplified switchgear design $\Upsilon_0$ instead. We are therefore interested in estimating the geometry-induced modelling error $|\hat V - \hat V_0|$, also called \emph{geometry simplification error}, without the knowledge of $\hat V$.

In the following, we append the index $0$ to every quantity introduced in Section~\ref{s:models} which relates to the simplified switchgear design $\Upsilon_0$. For instance, the notation $D^g$ will refer to the gas volume of $\Upsilon$, while $D^g_0$ will refer to the gas volume of $\Upsilon_0$.

\begin{figure}
\centering
\begin{subfigure}[t]{0.34\textwidth}
\centering
\begin{tikzpicture}[scale=1.5, transform shape, trim left=0cm, trim right=2cm]
\fill[gray!30!white] (0,-1) rectangle (1.5,1);
\node[gray!70!white] at (0.3,0.8) {\tiny$D^g$};
\fill[blue!30] (0,-0.05) -- (0,0.05) -- (1,0) -- cycle;
\draw[thick, blue] (0,-0.05) -- (0,0.05) -- (1,0) -- cycle;
\node[blue] at (0.5,0.2) {\tiny$D^V$};
\draw[thick, red!70!gray] (1.5,0.05) arc[start angle=90, end angle=270, radius=0.05] -- cycle;
\node[red!70!gray] at (1.3,0.25) {\tiny$F$};
\draw[thin,red!70!gray] (1.35,0.2) -- (1.43,0.05);
\fill[red!30] (1.5,-1) rectangle (1.6,1);
\draw[thick, red!70!gray] (1.5,-1) rectangle (1.6,1);
\node[red!70!gray] at (1.85,0.05) {\tiny$D^G$};
\draw[densely dotted] (1,0) -- (1.25,0);
\node at (1.125,-0.1) {\tiny$\nu$};
\end{tikzpicture}
\caption{Exact configuration with feature~$F$.}
\label{fig:sub1}
\end{subfigure}\hfill
\begin{subfigure}[t]{0.33\textwidth}
\centering
\begin{tikzpicture}[scale=1.5, transform shape, trim left=2.5cm, trim right=0cm]
\fill[gray!30!white] (0,-1) rectangle (1.5,1);
\node[gray!70!white] at (0.3,0.8) {\tiny$D^g_0$};
\fill[blue!30] (0,-0.05) -- (0,0.05) -- (1,0) -- cycle;
\draw[thick, blue] (0,-0.05) -- (0,0.05) -- (1,0) -- cycle;
\node[blue] at (0.5,0.2) {\tiny$D^V$};
\fill[red!30] (1.5,-1) rectangle (1.6,1);
\draw[thick, red!70!gray] (1.5,-1) rectangle (1.6,1);
\node[red!70!gray] at (1.85,0.05) {\tiny$D^G$};
\draw[densely dotted] (1,0) -- (1.25,0);
\node at (1.125,-0.1) {\tiny$\nu$};
\end{tikzpicture}
\caption{Simplified configuration without feature~$F$.}
\label{fig:sub2}
\end{subfigure}\hfill
\begin{subfigure}[t]{0.3\textwidth}
\centering
\begin{tikzpicture}[scale=22.5]
\fill[gray!30!white] (1.42,-0.05) rectangle (1.55,0.05);
\fill[red!30] (1.5,0.025) arc[start angle=90, end angle=270, radius=0.025] -- cycle;
\draw[thick, cyan!40!gray] (1.5,0.025) arc[start angle=90, end angle=270, radius=0.025];
\node[red!70!gray] at (1.487,0) {\small$F$};
\node[cyan!40!gray] at (1.474,0.02) {\small$\gamma$};
\fill[red!30] (1.5,-0.05) rectangle (1.55,0.05);
\draw[thick, red!70!gray] (1.5,-0.05) -- (1.5,-0.025);
\draw[thick, red!70!gray] (1.5,0.025) -- (1.5, 0.05);
\draw[thick, densely dashed, magenta!40!black] (1.5,0.025) -- (1.5,-0.025);
\node[magenta!40!black] at (1.51,0) {\small$\gamma_0$};
\node[red!70!gray] at (1.565,0) {\small$D^G$};
\node[gray!70!white] at (1.405,0) {\small$D^g$};
\end{tikzpicture}
\caption{Zoomed view on feature~$F$ with boundary notation.}
\label{fig:notationboundaries}
\end{subfigure}
\caption{Example of a two-electrode configuration satisfying Assumptions~\ref{as:allassumptions}.}
\label{fig:illustrationassumption}
\end{figure}

Our analysis of geometry simplification error estimation relies on the following assumptions:
\begin{assumption}\label{as:allassumptions}
\begin{enumerate}
    \item $\Upsilon$ and $\Upsilon_0$ both correspond to a simple two-electrode configuration falling under Assumption~\ref{as:2elecgeom}. 
    \item $\Upsilon_0$ is obtained from $\Upsilon$ by removing one protrusion $F$, also called \emph{feature}, from one of the electrodes. Equivalently, $D^g_0 = \mathrm{int}\!\left(\overline{D^g} \cup \overline{F}\right)$, and thus in particular $D^g\subset D^g_0$. In the literature on analysis-aware geometry simplification, since the differential problem~\eqref{eq:poissonproblemwithassumptions} is solved in the gas domain $D^g$, $F$ is called \emph{negative} feature. The cases of a \emph{positive} or a \emph{complex} feature go beyond the scope of this manuscript.
    \item The gas volumes $D^g$ and $D^g_0$ are $C^3$-regular (see Section~\ref{ss:modelwithreducingassumptions}).
    \item\label{as:samexstar} When the inception voltage is applied to the two electrodes, inception occurs for both $\Upsilon$ and $\Upsilon_0$ from the same inception point $\x^\star\in\partial D^g$, assumed known \textit{a priori}. Hence, recalling the definition of the inception voltage given by~\eqref{eq:definceptionvoltage},
    \begin{equation}
        |\hat V - \hat V_0| = \left| \hat{\omega}\!\left(\overline{\E}, \nu(\x^\star, \overline{\E})\right) - \hat{\omega}\!\left(\overline{\E}_0, \nu(\x^\star, \overline{\E}_0)\right) \right|.
    \end{equation}
    This assumption is quite restrictive, as it is often but not always verified in practice. If it is not satisfied, the analysis would be more involved since the inception voltage would then be computed along completely different field lines for $\Upsilon$ and $\Upsilon_0$, and the resulting error would also include a contribution from the difference between these two field lines. This case is left for future work.

    \item\label{as:hatomegaambiguity} Feature $F$ does not interact with the critical portion of the $\overline{\E}_0$-field line originating at $\x^\star$, in the following sense. For all $t\in (0, \hat T_0)$, let 
    \begin{equation} \label{eq:tangent}
        \mathbf{t}:=\displaystyle\frac{\overline{\E}_0\big(\x_0(t)\big)}{\left|\overline{\E}_0\big(\x_0(t)\big)\right|}=\frac{\mathrm{d}\x_0}{\mathrm{d}t}(t)
    \end{equation}
    be the unit tangent vector to the field line, let
    \begin{equation}\label{eq:orthogonalplane}
        \Pi_t := \left\{\y\in D^g_0\,:\,(\y-\x_0(t))\cdot\mathbf{t}=0\right\}
    \end{equation}
    be the plane orthogonal to the field line at $\x_0(t)$, and let 
    \begin{equation}
        \Pi := \bigcup_{t\in(0,\hat T_0)} \Pi_t.
    \end{equation}
    Then there is an open neighborhood $\Pi^\mathrm{N}$ of $\Pi$ which does not intersect $F$, i.e., $\Pi^\mathrm{N}\subset D^g\subset D^g_0$.
    This assumption is reasonable: if the critical field line did interact with $F$, the corresponding region would carry a supercritical electric field with strong influence on the inception voltage, suggesting that the feature should not be removed.

    We can then easily remove the ambiguity in the definition of $\hat\omega$ when applied both to $\Upsilon$ and~$\Upsilon_0$ by setting $\hat{\omega}(\overline{\E}_0):=\hat{\omega}\!\left(\overline{\E}_0\vert_{D^g}\right)$, using a slight abuse of notation. In particular, we thus consider $\hat\omega \in \left(\big[L^2(D^g)\big]^3\right)^* \cong \big[L^2(D^g)\big]^3$ (up to isomorphism). Ambiguities in the definitions of $\nu$, $S$ and $\hat T$ are similarly resolved.

    \item\label{as:notdistortedfieldlines} The presence or absence of feature $F$ does not distort the field line originating at $\x^\star$, i.e., 
    \begin{equation}
        \nu(\x^\star, \overline{\E})=\nu(\x^\star, \overline{\E}_0)\cap\overline{D^g}.
    \end{equation} 
    In other words, the background electric field $\overline{\E}_0$ has the same direction as $\overline{\E}$ along $\nu(\x^\star, \overline{\E})$ (but not necessarily the same magnitude). To simplify the notation, we will therefore not write the dependence of $\hat\omega$ and of $\hat T$ (defined in~\eqref{eq:SIeqKc}) on $\nu$, and the inception voltage error reduces to
    \begin{equation}
        |\hat V - \hat V_0| = \left| \hat{\omega}(\overline{\E}) - \hat{\omega}(\overline{\E}_0) \right|.
    \end{equation}
    A relevant case satisfying this assumption is illustrated in Figure~\ref{fig:illustrationassumption} (see also Section~\ref{s:numexample}).

    \item\label{as:highlynonhomog} The equality 
    \begin{equation}
            \hat\omega(\overline{\E}_0)\big|\overline{\E}_0\!\left(\x_0(\hat T_0)\right)\big| = E_c
    \end{equation}
    holds, where $\x_0(t)$ are the points along the field line $\nu_0=\nu$ originated at $\x^\star$, and $\hat T_0$ is defined as
    \begin{equation}\label{eq:T0hat}
        \hat T_0 := \hat T\!\left(\hat\omega(\overline{\E}_0),\overline{\E}_0\right),
    \end{equation}
    so that $\alpha_\mathrm{eff}\!\left(\hat\omega(\overline{\E}_0)\big|\overline{\E}_0\!\left(\x_0(\hat T_0)\right)\big|\right)=0$. This assumption is satisfied, for instance, in two-electrode configurations producing highly inhomogeneous electric fields, as illustrated in Figure~\ref{fig:illustrationassumption}. Remark that from Assumption~\ref{as:allassumptions}.\ref{as:hatomegaambiguity}, $\hat T_0 = \hat T\!\left(\hat\omega(\overline{\E}_0\vert_{D^g}),\overline{\E}_0\vert_{D^g}\right)$; thus $\hat T_0$ is a functional of the electric field restricted to $D^g$, that is, $\hat T_0\in \left(\big[L^2(D^g)\big]^3\right)^* \cong \big[L^2(D^g)\big]^3$ (up to isomorphism).\\
\end{enumerate}
\end{assumption}

To the best of our knowledge, the literature provides analysis-aware defeaturing estimators for:
\begin{itemize}
    \item energy errors, see~\cite{paper1defeaturing, weder2025analysis, paper3multifeature, phdthesis, buffa2024equilibrated, buffa2022adaptive}, i.e., for our application, $\left\|\nabla(\overline{\varphi} - \overline{\varphi}_0\vert_{D^g})\right\|_{0,D^g} = \left\|\overline{\E} - \overline{\E}_0\vert_{D^g}\right\|_{0,D^g}$;
    \item linear quantities of interest (QoIs), see~\cite{weder2025certified}, i.e., for our application,
    $\left| J(\overline{\varphi}) - J(\overline{\varphi}_0\vert_{D^g})\right|$, where $J\in H^{-1}(D^g)$ is a linear functional of the background electric potential.
\end{itemize}
Energy is a spatially averaged quantity and does not provide a reliable criterion for predicting dielectric breakdown, which is governed by localized field concentrations and singularities. In contrast, QoIs can capture such local effects; however, the inception voltage~$\hat\omega$ is a highly nonlinear functional of the electric field. 

Our strategy is as follows. We first recall in Section~\ref{ss:estimatorfrom1storder} the certified goal-oriented analysis-aware defeaturing estimator of~\cite{weder2025certified}, which applies to any regular linear QoI belonging to $H^{-1}$. In Section~\ref{ss:1storderapprox} we derive a linear first-order approximation $J$ of $\hat\omega$. The QoI $J$ obtained in this way is, however, only of low regularity ($J\in H^{-s}$, $s>2$), and its direct use in the estimator of Section~\ref{ss:estimatorfrom1storder} is not licit: Section~\ref{ss:regularization} addresses this issue by introducing a Gaussian regularization $J_r$ of $J$ that recovers the required $H^{-1}$-regularity. Finally, Section~\ref{ss:puttingittogether} combines all the ingredients into a computable error bound for the inception voltage.

\subsection{Goal-oriented defeaturing estimator for regular quantities of interest}\label{ss:estimatorfrom1storder}

We recall the certified analysis-aware defeaturing estimator from~\cite{weder2025analysis,weder2025certified} adapted to our application. Let
\begin{equation}\label{eq:defgammagamma0}
    \gamma:=\mathrm{int}\!\left(\overline{\partial D^g} \cap \overline{\partial F}\right) \quad \text{ and } \quad \gamma_0:=\partial F\setminus\overline{\gamma}.
\end{equation}
That is, $\gamma$ is the \emph{defeatured} portion of $\partial D^g$ which is removed when $F$ is removed, and $\gamma_0$ is the \emph{simplified} portion of $\partial D^g_0$ which replaces $\gamma$. In particular, $\partial F=\mathrm{int}(\gamma \cup \gamma_0)$. This notation is illustrated in Figure~\ref{fig:notationboundaries}. 
Note from~\eqref{eq:poissonproblemwithassumptions} that the boundary conditions imposed on both $\gamma$ and $\gamma_0$ are of the same Dirichlet type. Let us denote by $V_D\in\{0,1\}$ this Dirichlet value imposed on $\gamma$ and $\gamma_0$.\\

The main results from~\cite{weder2025analysis,weder2025certified}, applied to our setting, then read as follows.

\begin{theorem} \label{th:wederbuffa}
Let $\mathcal E_{H^1}\colon H^1(D_0^g)\times \R\to\R$ be the functional
\begin{equation}\label{eq:H1functional}
    \mathcal E_{H^1}(u_0,V) := 2\sqrt{\|V-u_0\|_{0,\gamma}\,\|\nabla_{\mathbf{t}}(V-u_0)\|_{0,\gamma}},\quad\text{for all $u_0\in H^1(D_0^g)$ and $V\in\R$},
\end{equation}
where $\nabla_{\mathbf{t}}$ denotes the tangential gradient along $\gamma$. Let furthermore $J^{\,\mathrm{reg}}\in H^{-1}(D^g)$ be a regular linear quantity of interest, and define $J^{\,\mathrm{reg}}_0\in H^{-1}(D_0^g)$ as 
\begin{equation}
    J^{\,\mathrm{reg}}_0(u_0) := J^{\,\mathrm{reg}}(u_0\vert_{D^g}) \quad \text{for all $u_0\in H^1(D_0^g)$},
\end{equation}
and let $z_0^{\mathrm{reg}}\in H^1_0(D_0^g)$ be the corresponding so-called \emph{dual} solution, satisfying
\begin{equation}\label{eq:dual}
    \begin{cases}
        -\Delta z_0^{\mathrm{reg}} = J^{\,\mathrm{reg}}_0 & \text{ in }D_0^g,\\
        z_0^{\mathrm{reg}} = 0 & \text{ on } \partial D_0^g.
    \end{cases}
\end{equation}
In the weak sense, the problem reads: find $z_0^{\mathrm{reg}}\in H^1_0(D_0^g)$ such that, for all $z_t\in H^1_0(D_0^g)$,
\begin{equation}\label{eq:dualweak}
    \int_{D_0^g} \nabla z_0^{\mathrm{reg}}\cdot\nabla z_t = J^{\,\mathrm{reg}}_0\left(z_t\right).
\end{equation}
Denoting by $\mathbf{n}$ the unit normal vector on $\gamma$ pointing outward from $D^g$, the following estimates hold:
\begin{align}
    &\left\| \overline{\E}-\overline{\E}_0\vert_{D^g} \right\|_{0,D^g} \leq C_1\, \mathcal E_{H^1}(\overline{\varphi}_0,V_D), \label{eq:H1est}\\
    &\left| J^{\,\mathrm{reg}}(\overline{\varphi}) - J^{\,\mathrm{reg}}(\overline{\varphi}_0\vert_{D^g}) + \int_{\gamma} \nabla z_0^{\mathrm{reg}}\cdot\mathbf{n}\left(V_D-\overline{\varphi}_0\right)\,\mathrm ds \right| \leq C_2\,\mathcal E_{H^1}(\overline{\varphi}_0, V_D)\,\mathcal E_{H^1}(z_0^{\mathrm{reg}}, 0), \label{eq:GOest}
\end{align}
where $C_1$ and $C_2$ are constants independent of the feature size but possibly depending on its shape.
\end{theorem}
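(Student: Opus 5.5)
The plan is to reduce both estimates to the analysis-aware defeaturing framework of~\cite{weder2025analysis,weder2025certified}, using the structure of our setting: a negative feature, and pure Dirichlet data equal to $V_D$ on both $\gamma$ and $\gamma_0$.

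\textbf{Step 1: error equation.} Let $e:=\overline{\varphi}-\overline{\varphi}_0\vert_{D^g}$. It is harmonic in $D^g$ and vanishes on $\partial D^g\setminus\overline{\gamma}$. On $\gamma$ it equals $V_D-\overline{\varphi}_0$. Hence $e$ is the harmonic extension into $D^g$ of the boundary datum $g:=(V_D-\overline{\varphi}_0)\vert_\gamma$, extended by zero. This datum lies in $H^{1/2}_{00}(\gamma)$: indeed $\overline{\varphi}_0=V_D$ on $\gamma_0$, so $V_D-\overline{\varphi}_0$ vanishes on $\partial\gamma$.

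\textbf{Step 2: energy estimate \eqref{eq:H1est}.} By the Dirichlet principle,
\[
\|\nabla e\|_{0,D^g}\le \|\nabla w\|_{0,D^g}
\]
for any $H^1$ lifting $w$ of $g$. Using the minimal-extension bound, this is at most $C\|g\|_{H^{1/2}_{00}(\gamma)}$. The constant $C$ is made independent of the feature size by a scaling argument: map $\gamma$ to a reference feature of unit diameter and use that the $H^{1/2}$ seminorm is scale invariant in dimension $3$ up to the appropriate power.

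Then interpolate between $L^2(\gamma)$ and $H^1_0(\gamma)$:
\[
\|g\|_{H^{1/2}_{00}(\gamma)}\lesssim \|g\|_{0,\gamma}^{1/2}\,\|\nabla_{\mathbf t} g\|_{0,\gamma}^{1/2}.
\]
This yields $C_1\,\mathcal E_{H^1}(\overline{\varphi}_0,V_D)$, with $C_1$ depending only on the shape of $\gamma$ through the interpolation and extension constants.

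\textbf{Step 3: goal-oriented estimate \eqref{eq:GOest}.} Write $J^{\,\mathrm{reg}}(e)=J_0^{\,\mathrm{reg}}(\tilde e)$ for a suitable extension $\tilde e$. Then test the dual problem on $D^g$ with $e$. Green's formula on $D^g$, using $z_0^{\mathrm{reg}}\vert_{D^g}$, which is not zero on $\gamma$, gives
\[
J^{\,\mathrm{reg}}(e)=\int_{D^g}\nabla z_0^{\mathrm{reg}}\cdot\nabla e-\int_{\gamma}\nabla z_0^{\mathrm{reg}}\cdot\mathbf n\,e\,\mathrm ds .
\]
Since $e$ is harmonic and $z_0^{\mathrm{reg}}$ does not vanish on $\gamma$, we have
\[
\int_{D^g}\nabla z_0^{\mathrm{reg}}\cdot\nabla e=\int_{D^g}\nabla \zeta\cdot\nabla e,
\]
where $\zeta$ is the harmonic extension of $z_0^{\mathrm{reg}}\vert_\gamma$ (zero elsewhere on $\partial D^g$). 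The sign of the $\gamma$ term must match the convention in \eqref{eq:GOest}, with $e=V_D-\overline{\varphi}_0$ on $\gamma$. The remainder is then a product of two energies. Cauchy--Schwarz and Step 2, applied once to $e$ and once to $\zeta$ (with datum $0-z_0^{\mathrm{reg}}$ on $\gamma$, vanishing on $\partial\gamma$ because $z_0^{\mathrm{reg}}=0$ on $\gamma_0$), give the bound $C_2\,\mathcal E_{H^1}(\overline{\varphi}_0,V_D)\,\mathcal E_{H^1}(z_0^{\mathrm{reg}},0)$.

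\textbf{Main obstacle.} The hard part is making the constants independent of the feature size. This needs a uniform extension and trace constant on $\gamma$ under dilation, and a justification that the $H^{1/2}_{00}$ interpolation inequality holds with shape-only constants. Here I would rely directly on the scaling lemmas of~\cite{weder2025analysis}. I would also check that the regularity $z_0^{\mathrm{reg}}\in H^{3/2+\epsilon}$ near $\gamma$ makes $\nabla z_0^{\mathrm{reg}}\cdot\mathbf n$ an $L^2(\gamma)$ trace, so that the boundary term is well defined. This is where $J^{\,\mathrm{reg}}\in H^{-1}$ together with the $C^3$ regularity of the domain enters.
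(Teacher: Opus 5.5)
The paper does not prove this theorem. It imports both bounds from \cite{weder2025analysis,weder2025certified} (``applied to our setting''), so your sketch reconstructs a cited result rather than offering an alternative to an argument in the text. The reconstruction is sound. The function $e=\overline{\varphi}-\overline{\varphi}_0\vert_{D^g}$ is harmonic with datum $V_D-\overline{\varphi}_0\in H^1_0(\gamma)$ on $\gamma$ and $0$ elsewhere. The Dirichlet principle together with the $L^2$--$H^1_0$ interpolation of $H^{1/2}_{00}(\gamma)$ then gives \eqref{eq:H1est}, and your Step~3 identity has the correct sign.

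What your route adds is structure. Your $\zeta$ is exactly $z_0^{\mathrm{reg}}\vert_{D^g}-z^{\mathrm{reg}}$, where $z^{\mathrm{reg}}\in H^1_0(D^g)$ is the dual solution on the exact domain: both functions are weakly harmonic in $D^g$ with the same traces. Hence the remainder in \eqref{eq:GOest} is the product of the primal and dual defeaturing errors in energy, and \eqref{eq:GOest} is just \eqref{eq:H1est} applied twice plus Cauchy--Schwarz. One may take $C_2=C_1^2$ when $C_1$ is understood as the lifting constant for arbitrary data in $H^1_0(\gamma)$. This is also the reading the paper adopts in Section~\ref{s:numexample}, where $\mathcal E_{H^1}(z_{0,r,h})$ is said to estimate $\|z_r-z_{0,r}\vert_{D^g}\|$.

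What your route does not buy is independence from the references. The one delicate point is the size-robust lifting and interpolation constants, and you defer it to the same scaling results the paper cites. That scaling argument only works with scale-consistent norms, namely the $H^{1/2}$ seminorm or the $[L^2,H^1_0]_{1/2}$ norm built on $\|\nabla_{\mathbf t}\cdot\|_{0,\gamma}$. The full $H^{1/2}(\gamma)$ norm, with its $\|g\|_{0,\gamma}$ part, does not scale correctly.

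Two refinements. First, in Step~3, take an extension $\tilde e\in H^1_0(D_0^g)$ of $e$ that vanishes on $\gamma_0$; this is possible because the datum lies in $H^{1/2}_{00}(\gamma)$. Split $\int_{D_0^g}\nabla z_0^{\mathrm{reg}}\cdot\nabla\tilde e$ over $D^g$ and $F$, and use that $z_0^{\mathrm{reg}}$ is harmonic in $F$, since $J_0^{\mathrm{reg}}$ does not see $F$. This gives your boundary identity without pairing $-\Delta z_0^{\mathrm{reg}}$ with a function that is nonzero on $\gamma$.

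Second, $J^{\mathrm{reg}}\in H^{-1}$ alone yields only $z_0^{\mathrm{reg}}\in H^1$, not $H^{3/2+\epsilon}$ near $\gamma$. What makes $\nabla z_0^{\mathrm{reg}}\cdot\mathbf{n}$ and $\nabla_{\mathbf t}z_0^{\mathrm{reg}}$ square-integrable on $\gamma$, and hence $\mathcal E_{H^1}(z_0^{\mathrm{reg}},0)$ finite, is that $J_0^{\mathrm{reg}}$ vanishes near $\gamma$. This holds for $J_r$ supported in a tube around the critical part of $\nu$. The $C^3$ regularity of $D_0^g$ is then needed only near $\partial\gamma$.
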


\begin{remark} We note that:
    \begin{itemize}
        \item The two estimators on the right-hand sides of~\eqref{eq:H1est} and~\eqref{eq:GOest} capture the correct scaling with respect to the feature size: as $|F|$ shrinks, both the geometry simplification errors and the estimators decrease at the same rate, irrespective of $C_1$ and $C_2$. Instead, an important open question concerns how the feature's shape affects the constants $C_1$ and $C_2$, since features with high curvature induce stronger local field perturbations than gently rounded features of the same size. A rigorous investigation of this shape-dependence would thus be further needed, but some insights can already be found in the numerical experiments conducted in Section~\ref{s:numexample}.
        \item The estimators only require boundary integrals over the defeatured surface $\gamma$, and they are computable from the simplified primal and dual solutions only. This also means that the whole error (in the energy norm or in some QoI) is fully captured by the behavior of the solutions on $\gamma$.
        \item Bound~\eqref{eq:H1est} tells us that $\mathcal{E}_{H^1}$ captures the geometry simplification error in the energy norm. The dual problem~\eqref{eq:dual} is constructed to localize the QoI (for our application, an approximation of the inception voltage), indicating where it is most sensitive. That is, the product $\mathcal E_{H^1}(\overline{\varphi}_0, V_D)\mathcal E_{H^1}(z_0, 0)$ captures the global averaged geometry simplification error, weighted by the sensitivity to the QoI.
        \item The corrector term
        \begin{equation} \label{eq:corrector}
            R(z_0^{\mathrm{reg}}, \overline{\varphi}_0, V_D) := -\int_{\gamma} \nabla z_0^{\mathrm{reg}}\cdot\mathbf{n}\left( V_D-\overline{\varphi}_0 \right) \,\mathrm ds
        \end{equation}
        provides a first-order correction of the defeatured quantity $J^{\,\mathrm{reg}}(\overline{\varphi}_0\vert_{D^g})$, and the approximation
        \begin{equation*}
            J^{\,\mathrm{reg}}(\overline{\varphi}) \approx J^{\,\mathrm{reg}}(\overline{\varphi}_0\vert_{D^g}) + R(z_0^{\mathrm{reg}}, \overline{\varphi}_0, V_D)
        \end{equation*}
        has accuracy controlled by the product $\mathcal E_{H^1}(\overline{\varphi}_0, V_D)\,\mathcal E_{H^1}(z_0^{\mathrm{reg}}, 0)$.
        \item For Theorem~\ref{th:wederbuffa} to be useful in practice, the dual problem~\eqref{eq:dualweak} with $J_0^\mathrm{reg}$ on the right-hand side should not depend explicitely on the exact domain $D^g$ (for instance, it should not be the solution's average over $D^g$). Indeed, geometry defeaturing aims to avoid solving problems on the exact domain, or more precisely, when using a finite element solver, to avoid meshing it. 
    \end{itemize}
\end{remark}

\subsection{First-order approximation of the inception voltage error}\label{ss:1storderapprox}
\footnotetext[8]{More precisely, since $\hat\omega$ is a functional, $\hat\omega'(\overline{\E}_0)\cdot\left(\overline{\E}-\overline{\E}_0\vert_{D^g}\right)$ is the G\^ateaux derivative of $\hat\omega$ in the direction $\overline{\E}-\overline{\E}_0\vert_{D^g}$, evaluated at $\overline{\E}_0\vert_{D^g}$.}
By first-order Taylor expansion,
\begin{equation}\label{eq:firstorderapprox}
    \hat{\omega}(\overline{\E}) \approx \hat{\omega}(\overline{\E}_0) + \hat\omega'(\overline{\E}_0)\cdot(\overline{\E}-\overline{\E}_0\vert_{D^g}) = \hat{\omega}(\overline{\E}_0) - \hat\omega'(\overline{\E}_0)\cdot\nabla(\overline{\varphi} - \overline{\varphi}_0\vert_{D^g}),
\end{equation}
where $\hat\omega'(\overline{\E}_0)$ denotes the derivative\footnotemark[8] of $\hat\omega$ evaluated in $\overline{\E}_0$. If regular enough, the functional~$J$ given by
\begin{equation}\label{eq:functionalJ}
    J(\delta\varphi) := - \hat\omega'(\overline{\E}_0)\cdot \nabla\delta\varphi
    \quad \text{for all electric potentials $\delta\varphi\in H^1(D^g)$ such that }\nu(\x^\star,\nabla\delta\varphi)=\nu(\x^\star,\overline{\E}_0)\cap\overline{D^g},
\end{equation}
would be a good linear QoI candidate, varying with the background electric field similarly to the inception voltage. The next proposition gives a closed-form expression of this derivative evaluated in the electric potential's defeaturing error; the proof is found in Appendix~\ref{appendix} for completeness.

\begin{proposition}\label{prop:1storderapprox}
Let $\delta\overline{\varphi}:=\overline{\varphi}-\overline{\varphi}_0\vert_{D^g}$, and assume that $\hat T_0$ from~\eqref{eq:T0hat} is differentiable at $\overline{\E}_0$ in the direction~$\nabla\delta\overline{\varphi}$. Then under Assumption~\ref{as:allassumptions}, the functional $J$ defined by~\eqref{eq:functionalJ} admits the following closed-form expression when evaluated in $\delta\overline{\varphi}$:
\begin{equation}\label{eq:Jclosedform}
    J(\delta\overline{\varphi}) = \frac{\hat\omega(\overline{\E}_0) }{Z(\overline{\E}_0)}
    \int_0^{\hat T_0} \kappa(t)\mathbf{t}\cdot \nabla\delta\overline{\varphi}(\x_0(t))\,\mathrm dt,
\end{equation}
where 
\begin{align*}
&\kappa(t):= \alpha_\mathrm{eff}'\!\left(\hat\omega(\overline{\E}_0)|\overline{\E}_0(\x_0(t))|\right),\\
& Z(\overline{\E}_0) := \int_0^{\hat T_0} \alpha_\mathrm{eff}'\!\left(\hat\omega(\overline{\E}_0)|\overline{\E}_0(\x_0(t))|\right) |\overline{\E}_0(\x_0(t))|\,\mathrm dt, 
\end{align*}
and we recall from~\eqref{eq:tangent} that $\mathbf{t}$ is the unit tangent vector to the field line.
\end{proposition}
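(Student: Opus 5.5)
The derivative is obtained by implicit differentiation of the defining relation of $\hat\omega$. Write $G(\omega,\overline{\E},T) := S(\omega,\overline{\E},\nu,T) - K_c$. By definition~\eqref{eq:defomegahat}, at the base state $\overline{\E}_0$ we have $G(\hat\omega(\overline{\E}_0),\overline{\E}_0,\hat T_0)=0$. We perturb $\overline{\E}_s := \overline{\E}_0\vert_{D^g} + s\,\nabla\delta\overline{\varphi}$ and set $\hat\omega_s := \hat\omega(\overline{\E}_s)$ and $\hat T_s := \hat T(\hat\omega_s,\overline{\E}_s)$, so that $G(\hat\omega_s,\overline{\E}_s,\hat T_s)=0$ for small $s$. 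By Assumption~\ref{as:allassumptions}.\ref{as:notdistortedfieldlines}, the field line and its parametrization $\x_0(t)$ do not depend on $s$. Hence only the integrand and the upper limit of $S$ vary, and the field-line sensitivity term disappears. This is why $J$ is restricted to directions with $\nu(\x^\star,\nabla\delta\varphi)=\nu(\x^\star,\overline{\E}_0)\cap\overline{D^g}$.

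We differentiate the identity $\int_0^{\hat T_s}\alpha_\mathrm{eff}(\hat\omega_s|\overline{\E}_s(\x_0(t))|)\,\mathrm dt = K_c$ at $s=0$, using Leibniz's rule. This is licit because $\hat T_0$ is assumed differentiable in this direction and $\alpha_\mathrm{eff}$ is $C^1$ on $(E_c,\infty)$. The boundary term is $\frac{\mathrm d\hat T_s}{\mathrm ds}\,\alpha_\mathrm{eff}(\hat\omega(\overline{\E}_0)|\overline{\E}_0(\x_0(\hat T_0))|)$, and it vanishes by Assumption~\ref{as:allassumptions}.\ref{as:highlynonhomog}. This is exactly where that assumption is used, so we never need an explicit formula for $\hat T'$.

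For the interior term we need $\frac{\mathrm d}{\mathrm ds}|\overline{\E}_s| = \frac{\overline{\E}_0}{|\overline{\E}_0|}\cdot\nabla\delta\overline{\varphi} = \mathbf{t}\cdot\nabla\delta\overline{\varphi}$ by~\eqref{eq:tangent}. This is valid since $|\overline{\E}_0|\geq E_c/\hat\omega>0$ on $(0,\hat T_0)$. The chain rule then gives
\[
0 = \frac{\mathrm d\hat\omega_s}{\mathrm ds}\Big|_{s=0}\int_0^{\hat T_0}\kappa(t)\,|\overline{\E}_0(\x_0(t))|\,\mathrm dt + \hat\omega(\overline{\E}_0)\int_0^{\hat T_0}\kappa(t)\,\mathbf{t}\cdot\nabla\delta\overline{\varphi}(\x_0(t))\,\mathrm dt .
\]
The first integral is $Z(\overline{\E}_0)$, which is strictly positive because $\alpha_\mathrm{eff}'>0$ on the supercritical portion. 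We can therefore solve for $\hat\omega'(\overline{\E}_0)\cdot\nabla\delta\overline{\varphi} = \frac{\mathrm d\hat\omega_s}{\mathrm ds}\big|_{s=0}$. Recalling that $J(\delta\overline{\varphi}) = -\hat\omega'(\overline{\E}_0)\cdot\nabla\delta\overline{\varphi}$ from~\eqref{eq:functionalJ}, and that $\overline{\E}-\overline{\E}_0\vert_{D^g} = -\nabla\delta\overline{\varphi}$, the sign conventions combine to give~\eqref{eq:Jclosedform}.

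The main obstacle is justifying the differentiation of $s\mapsto\hat\omega_s$, that is, that $\hat\omega$ is Gâteaux differentiable in this direction. We would argue this by the implicit function theorem applied to $(\omega,s)\mapsto G(\omega,\overline{\E}_s,\hat T_s)$, whose $\omega$-partial derivative at the base point equals $Z(\overline{\E}_0)>0$. Regularity in $s$ comes from the assumed differentiability of $\hat T_0$ together with continuity of $\nabla\delta\overline{\varphi}$ along $\nu$. That continuity holds because $\overline{\varphi},\overline{\varphi}_0\in H^{2+\epsilon}$, so the gradient has a trace on the curve, and because $\Pi^\mathrm{N}\subset D^g$ by Assumption~\ref{as:allassumptions}.\ref{as:hatomegaambiguity}, so the restriction to $D^g$ is harmless. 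We also need the minimality in~\eqref{eq:defomegahat} to select a locally smooth branch. We would handle this by noting that $\partial_\omega S>0$, so the root in $\omega$ is locally unique.
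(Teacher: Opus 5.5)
Your proposal is correct and follows essentially the same route as the paper's proof. You differentiate the identity $S=K_c$ at $\hat\omega$ in the direction $\nabla\delta\overline{\varphi}$, with the field line frozen by Assumption~\ref{as:allassumptions}.\ref{as:notdistortedfieldlines}; the Leibniz boundary term vanishes by Assumption~\ref{as:allassumptions}.\ref{as:highlynonhomog}; you apply the chain rule to $\alpha_\mathrm{eff}$ and $|\overline{\E}_s|$; and you divide by $Z(\overline{\E}_0)>0$. Your implicit-function-theorem justification of the differentiability of $\hat\omega$ goes beyond the paper, which only remarks that the derivatives exist; if you write it out, note that $\hat T_s$ already depends on $\hat\omega_s$, so the upper limit should be a function of $(\omega,s)$ alone (e.g.\ the supercritical length $T^c(\omega,\overline{\E}_s)$, whose variation contributes nothing since $\alpha_\mathrm{eff}(E_c)=0$).
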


\subsection{Lack of regularity of the goal functional and Gaussian mollification}\label{ss:regularization}

The estimator of Theorem~\ref{th:wederbuffa} requires the QoI $J^\mathrm{reg}$ to belong to the dual space $H^{-1}(D^g)$ so that the dual problem~\eqref{eq:dualweak} is well-posed in $H^1_0(D_0^g)$. This regularity requirement is standard in goal-oriented \textit{a posteriori} error estimators based on the dual-weighted residual paradigm, see~\cite{becker2001optimal,bangerth2003adaptive,giles2002adjoint,prudhomme1999goal}. Unfortunately, the closed-form expression~\eqref{eq:Jclosedform} of $J$ does not satisfy this requirement for general $\delta\varphi$:

\begin{proposition}\label{prop:regularityJ}
    The functional $J$ defined by~\eqref{eq:Jclosedform} belongs to $H^{-2-\epsilon}(D^g)\not\subset H^{-1}(D^g)$, for any $\epsilon>0$.
\end{proposition}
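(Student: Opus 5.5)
We will show that $J$ is a first-order derivative of a measure supported on a curve. Sobolev duality and the trace theorem then give the positive statement, membership in $H^{-2-\epsilon}$. An explicit family of test functions concentrating on the curve gives the negative statement, failure to lie in $H^{-1}$.

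\emph{Rewriting $J$.} By~\eqref{eq:Jclosedform},
\[
J(\delta\varphi)=\int_0^{\hat T_0} g(t)\,\mathbf{t}\cdot\nabla\delta\varphi(\x_0(t))\,\mathrm dt,
\qquad g:=\frac{\hat\omega(\overline{\E}_0)}{Z(\overline{\E}_0)}\,\kappa .
\]
The weight $g$ is bounded and continuous on $[0,\hat T_0]$, since $\alpha_\mathrm{eff}$ is $C^1$ above $E_c$ and $|\overline{\E}_0|$ is bounded along the line. By Assumption~\ref{as:allassumptions}.\ref{as:hatomegaambiguity}, the critical arc $\nu^c:=\x_0\big((0,\hat T_0)\big)$ lies in $D^g$, away from $F$. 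It is a $C^2$ curve, because $\overline{\E}_0$ is smooth there, so the field-line ODE gives a smooth arc parametrised by arclength. Distributionally, $J=-\nabla\cdot(g\,\mathbf{t}\,\delta_{\nu^c})$, where $\delta_{\nu^c}$ is the one-dimensional Hausdorff measure on the arc.

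\emph{Upper bound.} Take $\delta\varphi\in H^{2+\epsilon}(D^g)$. Then every component of $\nabla\delta\varphi$ lies in $H^{1+\epsilon}(D^g)$. The trace theorem onto a curve of codimension two in $\R^3$ loses $(3-1)/2=1$ derivative, which gives $H^{1+\epsilon}(D^g)\to H^{\epsilon}(\nu^c)\subset L^2(\nu^c)$. This requires smoothness exponent strictly above $1$, which is where $\epsilon>0$ enters. Near the endpoint $\x^\star\in\partial D^g$ we first extend $\delta\varphi$ across the boundary, or use a localized trace on a tubular neighbourhood. The Cauchy--Schwarz inequality along the arc then yields
\[
|J(\delta\varphi)|\le \|g\|_{L^2(0,\hat T_0)}\,C\,\|\delta\varphi\|_{H^{2+\epsilon}(D^g)} .
\]
Hence $J$ is a bounded functional on $H^{2+\epsilon}$, i.e., $J\in H^{-2-\epsilon}(D^g)$ with the usual identification of duals.

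\emph{Not in $H^{-1}$.} We construct $\delta\varphi_n\in H^1(D^g)$ with $\|\delta\varphi_n\|_{H^1}$ bounded but $J(\delta\varphi_n)\to\infty$. Choose a subarc $t\in(a,b)$ on which $g\ge c>0$; it exists since $\alpha_\mathrm{eff}'>0$ on the supercritical range. In tubular coordinates $(t,\rho,\theta)$ around that subarc, set
\[
\delta\varphi_n(t,\rho)=\chi(t)\,\psi_n(\rho)\,(t-a),
\]
where $\chi$ is a smooth cutoff and $\psi_n$ is the logarithmic profile $\psi_n(\rho)=\log(1/\rho)/\log n$, truncated to $1$ for $\rho<1/n$ and to $0$ for $\rho>1$. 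In the two transverse dimensions $\|\nabla\psi_n\|_{L^2}^2\sim 1/\log n\to0$, so the $H^1$ norms stay bounded. On the curve itself, $\mathbf{t}\cdot\nabla\delta\varphi_n=\partial_t(\chi(t)(t-a))$, which is $O(1)$. This shows the log-capacity idea alone only gives boundedness, so the choice must be sharpened.

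The sharper choice multiplies by an amplitude $A_n$ with $A_n^2/\log n$ bounded, for instance $A_n=\sqrt{\log n}$. This keeps $\|\delta\varphi_n\|_{H^1}$ bounded while $J(\delta\varphi_n)\approx A_n\int g\,\partial_t(\chi\,(t-a))\,\mathrm dt\sim\sqrt{\log n}\to\infty$, once $\chi$ is chosen so that the integral is nonzero. Here we use that the value on the curve is $A_n$, unbounded, even though the energy is bounded: points, and hence lines, have zero $H^1$-capacity in codimension two.

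The main obstacle is making this counterexample rigorous. One must handle the curvature of the tubular coordinates, which only gives bounded perturbations of the metric. One must also check that the tangential derivative of the test function along the curve is really of size $A_n$ and not cancelled, and that the construction stays inside $D^g$, away from $\x^\star$ and $\partial D^g$. The equality $H^{-2-\epsilon}\not\subset H^{-1}$ itself is standard. The upper bound, the trace theorem on curves, is routine.
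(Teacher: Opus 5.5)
Your proof of membership in $H^{-2-\epsilon}(D^g)$ follows the paper's argument. The weight along the critical arc is bounded, and the trace onto a codimension-two curve in $\R^3$ is bounded only from $H^{s}$ with $s>1$. One therefore needs $\nabla\delta\varphi\in H^{1+\epsilon}$, i.e.\ $\delta\varphi\in H^{2+\epsilon}$; your extension across $\partial D^g$ near $\x^\star$ fills in a detail the paper skips. The statement's $\not\subset$ is only the standard non-inclusion of spaces, which the paper does not prove either, so this already completes the proof as stated.

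Your third paragraph goes further than the paper: it aims at $J\notin H^{-1}(D^g)$. The paper asserts this in the surrounding text, but it does not follow from the non-inclusion of spaces. The logarithmic-cutoff mechanism is the right one. In fact even your unamplified sequence suffices, since $\|\delta\varphi_n\|_{H^1}\sim(\log n)^{-1/2}\to0$ while $J(\delta\varphi_n)$ stays fixed.

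As written, however, the construction can fail. Suppose the tube surrounds a subarc $(a,b)\subset(0,\hat T_0)$ away from the endpoints, and $\chi(t)(t-a)$ vanishes at both ends. Then $J(\delta\varphi_n)=A_n\int_a^b g\,\partial_t\bigl(\chi(t)(t-a)\bigr)\,\mathrm dt=0$ for every cutoff whenever $g$ is constant on $(a,b)$. This case is not degenerate. For an affine law $\alpha_{\mathrm{eff}}(\xi)=k(\xi-E_c)$ above $E_c$ (the usual SF$_6$ model, admissible under the paper's assumptions), $g$ is constant on the whole arc. Since $\mathbf{t}\cdot\nabla u(\x_0(t))=\frac{\mathrm d}{\mathrm dt}u(\x_0(t))$, one then gets $J(u)=g\,\bigl[u(\x_0(\hat T_0))-u(\x^\star)\bigr]$, which does not see the interior of the arc at all.

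The fix is to let the tube cross the endpoint $\x_0(\hat T_0)$ along the field line; this point is interior to $D^g$ in the paper's setting. Use a profile $\eta$ with $\eta(a)=0$, $\eta'\ge0$ on $(a,\hat T_0)$ and $\eta(\hat T_0)>0$. Then $J(u_n)=A_n\int_a^{\hat T_0}g\,\eta'\,\mathrm dt>0$, which uses only $g>0$. You do not need a uniform bound $g\ge c$, which can fail as $t\to\hat T_0$ if $\alpha_{\mathrm{eff}}'(\xi)\to0$ as $\xi\downarrow E_c$.

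The same integration by parts along the arc shows that $J$ is a line density plus two point masses. Hence $J\in H^{-3/2-\epsilon}$ whenever $g'$ is integrable, which is the exponent quoted in the paper's introduction.
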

Note that since $\overline{\varphi}$ and $\overline{\varphi}_0$ are solutions of Poisson problem~\eqref{eq:poissonproblemwithassumptions} on $D^g$ and $D^g_0$, respectively, and since $D^g$ and $D^g_0$ are $C^3$-regular, then $\delta\overline{\varphi}\in H^3(D^g)$. Thus $J$ is well-defined on $\delta\overline{\varphi}$, but it does not belong to $H^{-1}(D^g)$ in general. The proof of Proposition~\ref{prop:regularityJ} can be found in Appendix~\ref{appendix} for completeness.

This is a classical lack-of-regularity issue for QoIs supported on lower-dimensional manifolds; see~\cite{adams2003sobolev,lions1972nonhomogeneous,evans2010partial} for the underlying trace and Sobolev-space theory, and~\cite{tornberg2004numerical} for a discussion in the context of singular source terms in PDEs. Plugging~\eqref{eq:Jclosedform} directly into~\eqref{eq:dualweak} would yield a dual solution that does not belong to $H^1_0(D_0^g)$, and the estimate~\eqref{eq:GOest} of Theorem~\ref{th:wederbuffa} would not be valid.\\

To recover $H^{-1}$-regularity while keeping the QoI close to its physical meaning, we replace the line integral in~\eqref{eq:Jclosedform} by a volume integral against the following Gaussian mollifier $\rho_r$ for $r>0$:
\begin{equation}\label{eq:rhoeps}
    \rho_r(\x) := \frac{1}{2\pi r^2}\,\exp\!\left(-\frac{|\x|^2}{2r^2}\right), \quad \text{for all $\x\in D^g$}.
\end{equation}
That is, recalling the definition of $\Pi^\mathrm{N}$ and $\Pi_t$ for $t\in(0,\hat T_0)$ from Assumption~\ref{as:allassumptions}.\ref{as:hatomegaambiguity}, and if we let $\Pi^\mathrm{N}_t:= \Pi^\mathrm{N}\cap\Pi_t$, then the regularized QoI $J_r$ is defined as follows:
\begin{equation}\label{eq:JeRdef}
    J_r(\delta\varphi) := \frac{\hat\omega(\overline{\E}_0)}{Z(\overline{\E}_0)} \int_0^{\hat T_0} \left[ \int_{\Pi^\mathrm{N}_t} \rho_r\big(\y-\x_0(t)\big)\kappa(t)\mathbf{t}\cdot \nabla\delta\varphi(\y) \,\mathrm d\y \right]\,\mathrm dt.
\end{equation}

We can now prove that $J_r$ satisfies the regularity requirement of Theorem~\ref{th:wederbuffa}.
\begin{proposition}\label{prop:regularJregularity}
    The regularized functional $J_r$ defined by~\eqref{eq:JeRdef} belongs to $H^{-1}(D^g)$ for all $r>0$. Moreover, 
    \begin{equation}
        \lim_{r\to 0} J_r = J \quad \text{in the sense of distributions}.
    \end{equation}
\end{proposition}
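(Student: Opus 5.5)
The plan is to rewrite $J_r$ as the pairing of $\nabla\delta\varphi$ with a vector field $\mathbf{g}_r\in[L^2(D^g)]^3$. Then the $H^{-1}$ bound follows from Cauchy--Schwarz, and the distributional limit follows from continuity of the Gaussian mollifier on smooth functions.

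\textbf{Step 1: rewrite $J_r$ as a volume pairing.} For $t\in(0,\hat T_0)$ and $\y\in\Pi^\mathrm{N}$, the map $(t,\y)\mapsto \rho_r(\y-\x_0(t))\kappa(t)\mathbf{t}(t)$ is measurable and bounded. Indeed, $\rho_r\le (2\pi r^2)^{-1}$, $\kappa$ is continuous on $[0,\hat T_0]$ because $\alpha_\mathrm{eff}'$ is continuous above $E_c$, and $|\mathbf{t}|=1$. The slices $\Pi^\mathrm{N}_t$ are portions of planes, so the inner integral in~\eqref{eq:JeRdef} is a surface integral.

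I would change variables $(t,\mathbf{s})\mapsto \y=\x_0(t)+\mathbf{s}$, with $\mathbf{s}\perp\mathbf{t}(t)$. Assumption~\ref{as:allassumptions}.\ref{as:hatomegaambiguity} guarantees that this tube-like region lies in $D^g$. The curve $\x_0$ is $C^2$, since $\overline{\E}_0$ is smooth away from the boundary. Hence the Jacobian of this map is $|1-\mathbf{s}\cdot\mathbf{t}'(t)|$, which is bounded.

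This lets me write
\begin{equation}
J_r(\delta\varphi)=\int_{D^g}\mathbf{g}_r\cdot\nabla\delta\varphi\,\mathrm d\y ,
\end{equation}
where $\mathbf{g}_r$ is bounded and supported in the bounded set $\Pi^\mathrm{N}$. If overlapping slices make the change of variables awkward, I would skip it and apply Fubini together with Cauchy--Schwarz slice by slice instead.

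\textbf{Step 2: the $H^{-1}$ bound.} Cauchy--Schwarz gives
\begin{equation}
|J_r(\delta\varphi)|\le C_r\|\nabla\delta\varphi\|_{0,D^g},
\end{equation}
with $C_r\lesssim \hat\omega(\overline{\E}_0)\,Z(\overline{\E}_0)^{-1}\,\|\kappa\|_\infty\,r^{-2}\,\hat T_0^{1/2}$ times volume factors. This yields $J_r\in H^{-1}(D^g)$; equivalently, $J_r=-\nabla\cdot\mathbf{g}_r$ with $\mathbf{g}_r\in L^2$. The factor $Z(\overline{\E}_0)$ is positive because $\alpha_\mathrm{eff}'>0$ on the supercritical part of the line.

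\textbf{Step 3: the limit $r\to0$, which I expect to be the main obstacle.} Take a test function $\phi\in C_c^\infty(D^g)$ and set $f_t(\y)=\kappa(t)\mathbf{t}\cdot\nabla\phi(\y)$, which is smooth in $\y$. The kernel in~\eqref{eq:rhoeps} is the two-dimensional Gaussian, normalized to total mass one on the plane $\Pi_t$. It concentrates at $\x_0(t)$ as $r\to0$, so
\begin{equation}
\int_{\Pi_t^\mathrm{N}}\rho_r(\y-\x_0(t))f_t(\y)\,\mathrm d\y\;\longrightarrow\; f_t(\x_0(t))
\end{equation}
for each $t$.

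The delicate point is that the integration region is $\Pi_t^\mathrm{N}$ rather than all of $\Pi_t$. Since $\Pi^\mathrm{N}$ is an open neighborhood of $\Pi$, each slice contains a planar disc of some radius $\delta_t>0$ around $\x_0(t)$. I would use compactness of $[0,\hat T_0]$, or simply shrinking $\Pi^\mathrm{N}$, to make $\delta_t\ge\delta>0$ uniformly. The Gaussian tail outside this disc is then $O(e^{-\delta^2/2r^2})$, and the convergence is uniform in $t$ with error $O(r\,\|\nabla^2\phi\|_\infty)+O(e^{-\delta^2/2r^2})\|\nabla\phi\|_\infty$.

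The integrands are uniformly bounded by $\|\kappa\|_\infty\|\nabla\phi\|_\infty$. Dominated convergence in $t$ over $(0,\hat T_0)$ therefore gives $J_r(\phi)\to J(\phi)$, where $J(\phi)$ is given by~\eqref{eq:Jclosedform}. This is convergence in the sense of distributions. The same argument extends to $\delta\varphi\in H^{2+\epsilon}$ by the trace argument used in Proposition~\ref{prop:regularityJ}, but the distributional statement only requires the smooth case.
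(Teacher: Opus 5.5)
\textbf{Gap in Step~1: the Jacobian bound goes the wrong way.} Your plan is the paper's argument made explicit: boundedness of $\kappa$ and of the Gaussian weight for the $H^{-1}$ claim, and slice-wise mollifier convergence for the limit. The explicit version, however, contains a step that does not follow. Under $\y=\x_0(t)+\mathbf{s}$ one has $\mathrm d\y=|1-\mathbf{s}\cdot\mathbf{t}'(t)|\,\mathrm dS\,\mathrm dt$, so the field you obtain is
\[
\mathbf{g}_r(\y)=\frac{\hat\omega(\overline{\E}_0)}{Z(\overline{\E}_0)}\sum_{(t,\mathbf{s})\mapsto\y}\frac{\rho_r(\mathbf{s})\,\kappa(t)\,\mathbf{t}(t)}{|1-\mathbf{s}\cdot\mathbf{t}'(t)|}.
\]
This carries the \emph{inverse} Jacobian. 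An upper bound on $|1-\mathbf{s}\cdot\mathbf{t}'|$ therefore says nothing about $\mathbf{g}_r$; what you need is a lower bound $|1-\mathbf{s}\cdot\mathbf{t}'|\ge c>0$ on the slices.

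\textbf{This can genuinely fail.} Nothing in Assumption~\ref{as:allassumptions} provides the lower bound. Read literally, the definitions give $\Pi^\mathrm{N}_t=\Pi_t$, since $\Pi_t\subset\Pi\subset\Pi^\mathrm{N}$, so the slices are whole planes. For a curved critical segment whose focal set lies in the gas, these normal planes meet. For a circular arc of radius $R$, they all contain the axis of the circle, and on the wedge they sweep near that axis $|\mathbf{g}_r(\y)|\ge c_r/\mathrm{dist}(\y,\text{axis})$ with $c_r>0$. This is not square integrable near a line in $\R^3$. More strongly, in cylindrical coordinates $(\varrho,\theta,z)$ about the axis, take a localized $\psi=\chi(\theta)\eta(z)/\log(1/\varrho)$ with a suitable angular profile ($\chi(0)=0$, $\chi(\hat T_0/R)=1$). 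Then $\psi\in H^1$, while $J_r(\psi)$ diverges like $\int_0\mathrm d\varrho/(\varrho\log(1/\varrho))$, so $J_r\notin H^{-1}$ in that situation.

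\textbf{Your fallback does not help.} Fubini plus Cauchy--Schwarz slice by slice does not avoid this. The trace of $\nabla\psi$ on a single plane is not controlled by $\|\nabla\psi\|_{0,D^g}$. The integrated bound $\int_0^{\hat T_0}\|\nabla\psi\|_{0,\Pi^\mathrm{N}_t}^2\,\mathrm dt\lesssim\|\nabla\psi\|_{0,D^g}^2$ is exactly the missing lower Jacobian bound.

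\textbf{How to close it.} You need an extra hypothesis, for example $\Pi^\mathrm{N}_t\subset\{\x_0(t)+\mathbf{s}:|\mathbf{s}|\le\delta\}$ with $\delta\sup_t|\mathbf{t}'(t)|<1$. This holds automatically for the straight field line of Section~\ref{s:numexample}, where the Jacobian is identically $1$. With it, and with bounded $\kappa$, your Steps~1--2 are correct. The paper's one-line justification (``follows from the $C^\infty$-regularity of $\rho_r$'') passes over the same point.

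\textbf{Two smaller remarks.}

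Step~3 is correct and is a rigorous version of the paper's appeal to Evans~C.4. Pointwise convergence for each fixed $t\in(0,\hat T_0)$ plus your domination already suffices; this only uses that $\Pi^\mathrm{N}_t$ is relatively open and contains $\x_0(t)$. So the uniform radius $\delta$ is unnecessary. It is also not obviously available as $t\to0$, where $\x_0(t)\to\x^\star\in\partial D^g$, and shrinking $\Pi^\mathrm{N}$ would only change $J_r$.

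By Assumption~\ref{as:allassumptions}.\ref{as:highlynonhomog}, the argument of $\alpha_{\mathrm{eff}}'$ at $t=\hat T_0$ is exactly $E_c$, where $\alpha_{\mathrm{eff}}$ is only assumed continuous. So continuity of $\kappa$ on $[0,\hat T_0]$ needs $\alpha_{\mathrm{eff}}'$ to have a finite limit at $E_c^+$. This matters: take $\alpha_{\mathrm{eff}}(\xi)=\sqrt{\xi-E_c}$ above $E_c$, which is compatible with all stated assumptions, and a field crossing $E_c$ transversally. Then $\kappa(t)\sim(\hat T_0-t)^{-1/2}\notin L^2(0,\hat T_0)$, so $\mathbf{g}_r\notin L^2$ and your Cauchy--Schwarz bound fails. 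The paper tacitly assumes the same boundedness.
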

\begin{proof}
    Given the assumptions on the effective ionization coefficient $\alpha_\mathrm{eff}$ from Section~\ref{ss:streamerintegralmodel}, and the regularity of $\overline{\E}_0$, then $\kappa(t)$ is bounded on $(0,\hat T_0)$. The regularity of $J_r$ then follows from the $C^\infty$-regularity of $\rho_r$.
    
    Moreover, the inner integral in~\eqref{eq:JeRdef} is a convolution of $\kappa(t)\mathbf{t}\cdot \nabla\delta\varphi$ with the Gaussian mollifier~$\rho_r$ on the plane $\Pi^\mathrm{N}_t$, in which $\kappa(t)$ is taken constant. Then it is well-known that in this case, $J_r$ converges to $J$ as $r$ tends to zero (in the sense of distributions), see e.g., Section~C.4 of~\cite{evans2010partial}. 
\end{proof}

\subsection{Final inception voltage defeaturing error estimation}\label{ss:puttingittogether}
We now have all the ingredients to derive a computable error bound for the inception voltage error induced by defeaturing. 

\begin{proposition}\label{prop:finalresult}
    Applying Theorem~\ref{th:wederbuffa} with $J^{\,\mathrm{reg}} = J_r$ yields a well-defined regularized dual solution $z_{0,r}\in H^1_0(D_0^g)$. Let us denote by $\mathcal E_J(z_{0,r}, \overline{\varphi}_0, V_D)$ the following goal-oriented estimator associated with the regularized functional $J_r$:
    \begin{equation}\label{eq:goaldefestimator}
        \mathcal E_J(z_{0,r}, \overline{\varphi}_0, V_D) := \left| \mathcal E_{H^1}(\overline{\varphi}_0, V_D)\,\mathcal E_{H^1}(z_{0,r}, 0) - R(z_{0,r}, \overline{\varphi}_0, V_D)\right|.
    \end{equation}
    Then under Assumption~\ref{as:allassumptions}, the inception voltage error induced by removing feature $F$ can be estimated as follows:
    \begin{equation}
        \left| \hat V - \hat V_0 \right| \approx \left| J(\overline{\varphi})-J(\overline{\varphi}_0\vert_{D^g}) \right|
        \leq \widetilde C_2\,\lim_{r\to 0} \mathcal E_J(z_{0,r}, \overline{\varphi}_0, V_D),
    \end{equation}
    with $\widetilde C_2 = \max(1,C_2)$. 
\end{proposition}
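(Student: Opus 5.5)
The approximation $|\hat V-\hat V_0|\approx|J(\overline{\varphi})-J(\overline{\varphi}_0\vert_{D^g})|$ is not a rigorous inequality but a consequence of what precedes. By Assumption~\ref{as:allassumptions}.\ref{as:notdistortedfieldlines}, $|\hat V-\hat V_0|=|\hat\omega(\overline{\E})-\hat\omega(\overline{\E}_0)|$. The first-order expansion~\eqref{eq:firstorderapprox} together with the definition~\eqref{eq:functionalJ} and linearity of $J$ then give
\[
\hat\omega(\overline{\E})-\hat\omega(\overline{\E}_0)\approx J(\delta\overline{\varphi})=J(\overline{\varphi})-J(\overline{\varphi}_0\vert_{D^g}).
\]
Proposition~\ref{prop:1storderapprox} supplies the closed form, which is needed later to connect with $J_r$. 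So the "$\approx$" is justified up to second-order terms in $\overline{\E}-\overline{\E}_0\vert_{D^g}$, whose size is itself controlled by~\eqref{eq:H1est}.

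For the inequality, I will first work at fixed $r>0$. Proposition~\ref{prop:regularJregularity} gives $J_r\in H^{-1}(D^g)$. The extension $J_{r,0}(u_0):=J_r(u_0\vert_{D^g})$ belongs to $H^{-1}(D_0^g)$, because restriction $H^1(D_0^g)\to H^1(D^g)$ is bounded. Lax--Milgram then yields a unique $z_{0,r}\in H^1_0(D_0^g)$ solving~\eqref{eq:dualweak}, which is the first claim. Note also that $J_r$ only involves $\Pi^{\mathrm N}\subset D^g$, so $J_{r,0}$ does not require knowledge of $D^g$; this is consistent with the last bullet of the remark. Applying~\eqref{eq:GOest} with $J^{\mathrm{reg}}=J_r$, writing $a:=J_r(\overline{\varphi})-J_r(\overline{\varphi}_0\vert_{D^g})$ and using the corrector~\eqref{eq:corrector}, gives
\[
|a-R|\le C_2\,\mathcal E_{H^1}(\overline{\varphi}_0,V_D)\,\mathcal E_{H^1}(z_{0,r},0).
\]
The triangle inequality then yields $|a|\le C_2\,\mathcal E_{H^1}\mathcal E_{H^1}+|R|\le\widetilde C_2\,(\mathcal E_{H^1}\mathcal E_{H^1}+|R|)$. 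To obtain exactly the form~\eqref{eq:goaldefestimator}, with a difference inside an absolute value, I would argue via a sign convention: $\mathcal E_{H^1}\mathcal E_{H^1}\ge 0$, so choosing the orientation of $\mathbf n$ or of the field line so that $R\le 0$ makes $\mathcal E_{H^1}\mathcal E_{H^1}+|R|=|\mathcal E_{H^1}\mathcal E_{H^1}-R|$. Failing that, I would state the bound with $|R|$ and note that the two coincide under the sign assumption. I expect this bookkeeping to be a minor but real obstacle.

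The main obstacle is passing to the limit $r\to0$. The left-hand side requires $J_r(\delta\overline{\varphi})\to J(\delta\overline{\varphi})$. Proposition~\ref{prop:regularJregularity} gives only distributional convergence, but $\delta\overline{\varphi}\in H^3(D^g)$. Hence $\nabla\delta\overline{\varphi}$ is continuous on the compact neighbourhood $\Pi^{\mathrm N}$ of the critical field-line portion, by Sobolev embedding in three dimensions. Moreover, $\kappa$ is bounded and the planar Gaussian convolution converges pointwise and boundedly to the point value. Dominated convergence in $t$ then gives $J_r(\delta\overline{\varphi})\to J(\delta\overline{\varphi})$.

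On the right-hand side I will take $\limsup_{r\to0}$ (or assume the limit exists, as the statement implicitly does) and conclude
\[
|J(\delta\overline{\varphi})|=\lim_r|J_r(\delta\overline{\varphi})|\le\widetilde C_2\lim_r\mathcal E_J(z_{0,r},\overline{\varphi}_0,V_D).
\]
The limit of $\mathcal E_J$ may be infinite, since $z_{0,r}$ blows up as $J$ leaves $H^{-1}$. The bound is then trivially true, and is meaningful only when $\gamma$ stays away from $\Pi^{\mathrm N}$. In that case $z_{0,r}$ is harmonic near $\gamma$, and interior/boundary regularity away from the support keeps $\nabla z_{0,r}$ bounded on $\gamma$ uniformly in $r$. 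I would mention this as the justification for finiteness.
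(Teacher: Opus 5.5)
Your argument follows the same chain as the paper's proof \eqref{eq:errordecomp}: a Taylor expansion for the $\approx$, then estimate \eqref{eq:GOest} applied to $J_r$ at fixed $r$, then $r\to0$. The paper only strings these steps together, and you add three justifications it omits. First, well-posedness of $z_{0,r}$. Second, the convergence $J_r(\delta\overline{\varphi})\to J(\delta\overline{\varphi})$, which you obtain from continuity of $\nabla\delta\overline{\varphi}$ and dominated convergence. This step is genuinely needed: the distributional convergence of Proposition~\ref{prop:regularJregularity} only covers compactly supported test functions, whereas the support of $J$ reaches $\partial D^g$ at $\x^\star$. Third, a $\limsup$, in place of the paper's tacit assumption that $\lim_{r\to0}\mathcal E_J$ exists.

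Your concern about the corrector is justified, and it points to a real defect in the paper's last step. Write $a:=J_r(\overline{\varphi})-J_r(\overline{\varphi}_0\vert_{D^g})$, $P:=\mathcal E_{H^1}(\overline{\varphi}_0,V_D)\,\mathcal E_{H^1}(z_{0,r},0)$ and $R:=R(z_{0,r},\overline{\varphi}_0,V_D)$. Then \eqref{eq:GOest} says exactly $|a-R|\le C_2P$, which gives $|a|\le \widetilde C_2\,(P+|R|)$ and nothing sharper; this is also where $\max(1,C_2)$ comes from. The paper's final inequality $|a|\le\widetilde C_2\,|P-R|$ does not follow when $R>0$. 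For instance, $a=R=P>0$ is compatible with \eqref{eq:GOest}, yet $|P-R|=0$. So the statement you can prove, and the one the paper's argument actually supports, has $P+|R|$ in place of \eqref{eq:goaldefestimator}.

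Your first repair needs adjusting. The orientation of $\mathbf n$ is fixed by Theorem~\ref{th:wederbuffa}, and flipping it also flips the corrector inside \eqref{eq:GOest}. The orientation of the field line is dictated by $\overline{\E}_0$. What is legitimate is to apply the theorem to $-J_r$: this replaces $z_{0,r}$ by $-z_{0,r}$ and $R$ by $-R$, while leaving $|a|$ and $\mathcal E_{H^1}(z_{0,r},0)$ unchanged. So one can always arrange $R\le 0$, but that is just the $P+|R|$ bound in another form. In the paper's experiments $V_D=0$ and $|R|$ is at round-off level, which is why the discrepancy is invisible there.

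Three smaller points.
\begin{enumerate}
\item Lax--Milgram in $H^1_0(D_0^g)$ tacitly needs a bounded domain. On the exterior domain $D_0^g\subset\R^3$ the Dirichlet form is not coercive on $H^1_0$. The paper shares this issue, and the truncated computational domain of Section~\ref{s:numexample} removes it.
\item For finiteness of the limit, local harmonic regularity near $\gamma$ does not by itself give bounds uniform in $r$, because $\|z_{0,r}\|_{H^1(D_0^g)}\to\infty$. You first need a uniform local bound, e.g.\ from the representation $z_{0,r}(\x)=J_{r,0}\big(G_0(\x,\cdot)\big)$, where $G_0$ is the Green's function of $D_0^g$. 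For $\x$ near $\gamma$, $G_0(\x,\cdot)$ is smooth on the source region.
\item The claim that the Taylor remainder is second order and controlled by \eqref{eq:H1est} overreaches. With $\alpha_{\mathrm{eff}}$ only $C^1$ and $\hat T_0$ only directionally differentiable, no quadratic remainder bound is available. Moreover, the remainder involves point values of $\nabla\delta\overline{\varphi}$ on the critical portion of the field line. Estimate \eqref{eq:H1est} controls these only through harmonic estimates for $\delta\overline{\varphi}$ there, using $\delta\overline{\varphi}=0$ on $\partial D^V$.
\end{enumerate}
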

\begin{proof}
    Combining the first-order Taylor expansion~\eqref{eq:firstorderapprox}, Proposition~\ref{prop:1storderapprox}, the Gaussian mollification from Section~\ref{ss:regularization}, and the goal-oriented estimate~\eqref{eq:GOest} of Theorem~\ref{th:wederbuffa} applied to the regularized functional $J_r$, we obtain the following error estimation for the inception voltage error induced by removing feature $F$:
    \begin{equation}\label{eq:errordecomp}
        \begin{aligned}
            \left| \hat V - \hat V_0 \right|
            = \left| \hat\omega(\overline{\E})-\hat\omega(\overline{\E}_0) \right|
            \approx \left| J(\overline{\varphi})-J(\overline{\varphi}_0\vert_{D^g}) \right|
            &= \left| \lim_{r\to 0} \left[ J_r(\overline{\varphi})-J_r(\overline{\varphi}_0\vert_{D^g}) \right] \right|\\
            &= \lim_{r\to 0} \left| J_r(\overline{\varphi})-J_r(\overline{\varphi}_0\vert_{D^g}) \right|\\
            &\leq \widetilde C_2\,\lim_{r\to 0} \mathcal E_J(z_{0,r}, \overline{\varphi}_0, V_D).
        \end{aligned}
    \end{equation}
\end{proof}

Note that the error bound of Proposition~\ref{prop:finalresult} is computable since it only depends on the simplified primal and dual solutions, which are both defined on the simplified domain $D^g_0$, and the right hand side of the dual problem~\eqref{eq:dualweak} is defined on $\Pi^\mathrm{N}$ which is independent of the exact domain $D^g$.
Moreover, the corrector term $R(z_{0,r}, \overline{\varphi}_0, V_D)$ provides a first-order correction of the regularized defeatured quantity $J_r(\overline{\varphi}_0\vert_{D^g})$, and thus of the inception voltage $\hat V_0$.

\begin{remark}\label{rk:regerror} 
    In practice, the limit $r\to 0$ cannot be taken, and the regularization parameter $r$ should be chosen small enough to ensure that $J_r$ is close to $J$, but not too small to avoid numerical issues. In particular, the regularization error $\left| J(\varphi)-J_r(\varphi) \right|$ should thus be controlled to balance with the geometry simplification error captured by $\mathcal E_J(z_{0,r}, \overline{\varphi}_0, V_D)$.
\end{remark}

\section{Numerical illustration on a $2$D pin--plate configuration}\label{s:numexample}
This section illustrates the theory developed in Sections~\ref{s:models} and~\ref{s:framework} on a simple use case. We consider a $2$D pin-plate two-electrode configurations: a $1$~mm-thick, $3$~mm-tall high-voltage pin electrode is faced by a flat grounded plate at a $7$~mm gap, and a small protrusion~$F$ of width~$W$ and height~$H$ is placed on the plate. The protrusion is removed to obtain the simplified design. To ensure $C^\infty$-smoothness and avoid geometric point singularities, the pin and feature shapes are defined by cosine functions. $H$ corresponds to the feature's cosine amplitude and~$W$ to its period. Figure~\ref{fig:pinplate} shows geometries along with the corresponding background electric potentials and fields, obtained from~\eqref{eq:poissonproblemwithassumptions} and~\eqref{eq:defbackgroundelectricfield}, respectively. Note the distortion of the electric field due to feature $F$: this geometric configuration is a common scenario in high-voltage equipment where small geometric features may lead to significant local electric field enhancements and thus to significant errors in inception voltage predictions if not properly accounted for.

\input{figure0-pinplate}

The corresponding three-dimensional geometry is generated by extrusion in the out-of-plane direction. However, the third dimension does not add any particular value to the analysis, and the 2D setting is sufficient to illustrate the methodology. Moreover, the 2D setting allows for a more refined mesh and therefore for a more accurate reference solution, which is critical to verify the reliability of the computed effectivity index. 

In this case, the critical field line $\nu$ is the horizontal segment $y=5$~mm intersected with the gas domain, and the inception point $\x^\star=(3,5)$~mm is the pin tip in both~$\Upsilon$ and~$\Upsilon_0$. The considered geometric configuration satisfies Assumption~\ref{as:allassumptions}. In particular, since the field is highly inhomogeneous, the critical part of the field line does not interact with the feature since $F$ is small enough, and one can choose $\Pi^N$ for Assumption~\ref{as:allassumptions}.\ref{as:hatomegaambiguity} to be any tubular region around the critical part of the field line.


Moreover, by invariance of all quantities with respect to the out-of-plane direction, the regularized QoI $J_r$ defined in~\eqref{eq:JeRdef} can be computed using the 1D Gaussian mollifier $\rho_r^\mathrm{1D} (x,y) = \displaystyle\frac{1}{\sqrt{2\pi}r}\exp\left(-\displaystyle\frac{x^2+y^2}{2r^2}\right)$
in place of $\rho_r$ defined in~\eqref{eq:rhoeps} as follows:
\begin{equation}
    J_r(\delta\varphi) = \frac{\hat\omega(\overline\E_0)}{Z(\overline\E_0)}\int_0^{\hat T_0} \left[\int_{\Pi_t^N} \rho_r^\mathrm{1D}\big(\y-\x_0(t)\big) \kappa(t)\mathbf{t}\cdot \delta\varphi(\y)\,\mathrm{d}S(\y)\right]\,\mathrm{d}t.
\end{equation}

The objective of this section is to assess the reliability of the goal-oriented defeaturing estimator $\mathcal{E}_J(z_{0,r}, \overline{\varphi}_0, V_D)$ derived in Proposition~\ref{prop:finalresult} for different values of~$H$ and~$W$, and in particular to verify that
\begin{enumerate}[label=(\roman*)]
    \item the estimator $\mathcal{E}_J$ correctly captures the magnitude and the scaling of the inception voltage error~$|\hat V-\hat V_0|$ with respect to the feature's size;
    \item the resulting effectivity index $\mathcal{I}_\mathrm{eff}:= \displaystyle\frac{\mathcal{E}_J}{|\hat V-\hat V_0|}$ is a property of the geometry simplification alone, i.e., it is not polluted by either the discretization error or the regularization error introduced in Section~\ref{ss:regularization}.
\end{enumerate}
To lighten the notation since $V_D=0$, we remove the explicit dependence on $V_D$ from now on. For instance, we write $\mathcal{E}_J(z_{0,r}, \overline{\varphi}_0)$ instead of $\mathcal{E}_J(z_{0,r}, \overline{\varphi}_0, 0)$.

Achieving (ii) is non-trivial: the dual problem~\eqref{eq:dual} for the regularized QoI~$J_r$ introduced in Section~\ref{ss:regularization} has a Gaussian-tube source concentrated along the critical part of the field line~$\nu$, and the dual solution $z_{0,r}$ exhibits a sharp transverse layer. Therefore, a careful convergence study is required to ensure that the defeaturing error dominates over both the discretization and the regularization errors. \\

We use the finite element method on the gas domains $D^g$ and $D^g_0=\mathrm{int}(\overline{D^g}\cup\overline{F})$, truncated above and below by artificial boundaries $y=0$ and $y=10$~mm, on which homogeneous Neumann boundary conditions are imposed. The meshes are generated using Netgen, see~\cite{schoberl1997netgen}, via the NGSolve open-source finite element library, see~\cite{ngsolve}; first-order Lagrange elements are used everywhere. The meshes on $D^g$ and $D^g_0$ share the same nodes on the boundary~$\gamma$ (i.e., the meshes are conformal across~$\gamma$), so that the boundary integrals appearing in~\eqref{eq:H1functional} and~\eqref{eq:corrector} can be computed without any quadrature inconsistency. This is not necessary but it reduces the discretization error, whose control in combination with the defeaturing error goes beyond the scope of this manuscript. Furthermore, the pin and feature boundaries are discretized with curved edges of length $\displaystyle\frac{10}{2^{10}}$~mm. The finite element solutions of~\eqref{eq:poissonproblemwithassumptions} in $D^g$ and $D^g_0$ are denoted $\overline{\varphi}_h$ and $\overline{\varphi}_{0,h}$, respectively. More generally, we append an $h$ when referring to any discretized quantity.

The streamer integral~$S$ from~\eqref{eq:defS}, the inception voltages $\hat\omega(\overline{\E}_h)$ and $\hat\omega(\overline{\E}_{0,h})$, and the linear QoI $J$ from~\eqref{eq:Jclosedform} are computed along the known field line~$\nu=\{\x\in D^g: y=5\,\mathrm{mm}\}$, using the algorithm described in~\cite[Sec.~5.2]{thesis:xeno}. Rather than solving~\eqref{eq:deffieldlines} numerically, we use the known field line to avoid introducing additional numerical errors. Since first-order finite elements are used, $\overline{\E}_h$ and $\overline{\E}_{0,h}$ are piecewise constant and the line integrals are evaluated edge-by-edge. The dual solution $z_{0,r}$ is obtained by solving the regularized dual problem~\eqref{eq:dualweak} with $J^{\,\mathrm{reg}}=J_r$ as defined in~\eqref{eq:JeRdef}.\\

To achieve (ii), we combine the following ingredients:
\begin{itemize}
    \item an initial mesh of elements' diameter approximately $0.5$~mm which is manually refined to have elements of diameter approximately $10^{-3}$~mm near the electrode boundary, the feature boundary, and the line $y=5$~mm;
    \item a single shared adaptive mesh refinement strategy per feature's parameters~$(W,H)$, for which refinement is performed according to the union of the markings produced by classical primal and dual residual-based $H^1$-numerical error estimators, see for instance~\cite{babuvska1978posteriori,verfurth1996review}, and a DWR (dual weighted residual) goal-oriented error estimator corresponding to $J_r(\overline{\varphi}-\overline{\varphi}_0\vert_{D^g})$, see for instance~\cite{becker2001optimal}. In this way, all quantities entering~\eqref{eq:goaldefestimator} are computed on the same mesh. We refer to Section~\ref{ss:adaptive} for more details;
    \item a coupling $r = 2 h_{\nu}$ between the regularization parameter $r$ and the local mesh size $h_\nu$, where $h_\nu$ is the diameter of the largest cell intercepting the critical part of the field line $\nu$ (see Remark~\ref{rk:regerror}); the multiplicative factor~$2$ is used to ensure that at least two elements cover the Gaussian tube in the $y$-direction;
    \item the decomposition
    \begin{equation}\label{eq:decompositionerrorcontrol}
        J(\delta\overline{\varphi}) = J_r(\delta\overline{\varphi}_h) + J_r(\delta\overline{\varphi}-\delta\overline{\varphi}_h) + \left(J(\delta\overline{\varphi}_h) - J_r(\delta\overline{\varphi}_h)\right) + \left(J(\delta\overline{\varphi}-\delta\overline{\varphi}_h) - J_r(\delta\overline{\varphi}-\delta\overline{\varphi}_h)\right),
    \end{equation}
    where $\delta\overline{\varphi}:=\overline{\varphi}-\overline{\varphi}_0\vert_{D^g}$ and $\delta\overline{\varphi}_h:=\overline{\varphi}_h-\overline{\varphi}_{0,h}\vert_{D^g}$. 
    \begin{itemize}
        \item The first term $J_r(\delta\overline{\varphi}_h)$ is the computable quantity corresponding to the geometry simplification error of interest, which however includes numerical and regularization errors.
        \item The second term $J_r(\delta\overline{\varphi}-\delta\overline{\varphi}_h)$ is the goal oriented numerical error evaluated in the defeaturing error.
        \item The third term $J(\delta\overline{\varphi}_h) - J_r(\delta\overline{\varphi}_h)$ corresponds to the regularization error evaluated in the discrete defeaturing error.
        \item The last term $J(\delta\overline{\varphi}-\delta\overline{\varphi}_h) - J_r(\delta\overline{\varphi}-\delta\overline{\varphi}_h)$ corresponds to a higher-order term combining the regularization error and the numerical error evaluated in the defeaturing error.
    \end{itemize}
    By simultaneously monitoring all the first-order contributions along the adaptive iterations, we can ensure that the computed effectivity index reflects only the geometry simplification error.
\end{itemize}

\subsection{Combined adaptive single-mesh convergence study}\label{ss:adaptive}
For each pair of feature parameters~($W$,$H$), we run a single combined refinement strategy to achieve~(ii) and to ensure that all the quantities entering in the estimation~\eqref{eq:errordecomp} are computed on the same mesh and converge simultaneously. The use of a single mesh removes the need to transfer functions (and the associated transfer error) between primal, dual and goal-oriented meshes and between the exact and simplified geometries. The refinement strategy also allows us to separate the geometry simplification error, which depends only on~$W$ and $H$, from the discretization and regularization errors, which both vanish as the mesh is refined. 

The combined adaptive mesh refinement loop consists in iteratively executing the following steps:
\begin{enumerate}[label=(\alph*)]
    \item solve the primal problems~\eqref{eq:poissonproblemwithassumptions} on $D^g$ and $D^g_0$ to obtain $\overline{\varphi}_h$ and $\overline{\varphi}_{0,h}$, respectively, and compute the corresponding background electric fields $\overline{\E}_h$ and $\overline{\E}_{0,h}$;
    \item compute the inception voltages $\hat V_h$ and $\hat V_{0,h}$ along~$\nu$, the linear QoI~$J$ given by~\eqref{eq:Jclosedform}, and its regularization~$J_r$ given by~\eqref{eq:JeRdef} with $r=2 h_{\nu}$;
    \item solve the regularized dual problems~\eqref{eq:dualweak} on $D^g$ and $D^g_0$ to obtain $z_{r,h}$ and $z_{0,r,h}$, respectively;
    \item\label{item:h1num} evaluate the residual-based $H^1$-numerical error estimator $\mathcal{E}_N$ from~\cite{verfurth1996review} for the primal and the dual solutions, and for both the exact and simplified geometries. That is, we obtain the following four estimates: 
        \begin{itemize}
            \item $\mathcal{E}_{N}(\overline{\varphi}_h)$ estimating the numerical error $\left\|\overline{\varphi}-\overline{\varphi}_h\right\|_{H^1(D^g)}$, 
            \item $\mathcal{E}_{N}(\overline{\varphi}_{0,h})$ estimating the numerical error $\left\|\overline{\varphi}_0-\overline{\varphi}_{0,h}\right\|_{H^1(D^g_0)}$, 
            \item $\mathcal{E}_{N}(z_{r,h})$ estimating the numerical error $\left\|z_r-z_{r,h}\right\|_{H^1(D^g)}$,
            \item $\mathcal{E}_{N}(z_{0,r,h})$ estimating the numerical error $\left\|z_{0,r}-z_{0,r,h}\right\|_{H^1(D^g_0)}$;
        \end{itemize}
    \item\label{item:dwrnum} evaluate the quantity $\mathcal E_{\mathrm{DWR}}(\delta\overline{\varphi}_h)$ corresponding to the residual-based DWR goal-oriented estimator of the numerical error $\left|J_r(\overline{\varphi}-\overline{\varphi}_{0}\vert_{D^g})-J_r(\overline{\varphi}_h-\overline{\varphi}_{0,h}\vert_{D^g})\right|=\left|J_r(\delta\overline{\varphi}-\delta\overline{\varphi}_h)\right|$ from~\cite{becker2001optimal}. To do so, we use an enriched dual finite element space by keeping the same mesh on $D^g$ but globally raising the polynomial order of the finite elements from $1$ to $2$, and we use the corresponding enriched dual solution to evaluate the DWR estimator;
    \item mark elements using D\"orfler's strategy with parameter $\theta=0.5$, see~\cite{dorfler1996convergent}, for each of the five estimators from steps~\ref{item:h1num} and~\ref{item:dwrnum} separately. We obtain the sets of marked elements denoted $\mathcal M_{\overline{\varphi}}$, $\mathcal M_{\overline{\varphi}_0}$, $\mathcal M_z$, $\mathcal M_{z_0}$ and $\mathcal M_{\mathrm{DWR}}$, respectively;
    \item using $\overline{\varphi}_{0,h}$ and $z_{0,r,h}$, evaluate the defeaturing error estimators~\eqref{eq:H1est} in energy norm for the primal and dual problems, the corrector term $R\!\left(z_{0,r,h},\overline{\varphi}_{0,h},0\right)$ from~\eqref{eq:corrector}, and the defeaturing error estimator~\eqref{eq:goaldefestimator} for the QoI~$J_r$. That is, we obtain the following three estimates:
        \begin{itemize}
            \item $\mathcal{E}_{H^1}(\overline{\varphi}_{0,h})$ estimating the primal defeaturing error $\left\|\overline{\E}-\overline{\E}_{0}\vert_{D^g}\right\|_{0,D^g}$ with numerical errors,
            \item $\mathcal{E}_{H^1}(z_{0,r,h})$ estimating the dual defeaturing error $\left\|z_{r}-z_{0,r}\vert_{D^g}\right\|_{H^1(D^g)}$ with numerical errors,
            \item $\mathcal{E}_J(z_{0,r,h},\overline{\varphi}_{0,h})$ estimating the goal-oriented defeaturing error $|J_r(\overline{\varphi})-J_r(\overline{\varphi}_{0}\vert_{D^g})|$ with numerical errors;
        \end{itemize} 
    \item evaluate the regularization error $E_r:=\left|J(\delta\overline{\varphi}_h)-J_r(\delta\overline{\varphi}_h)\right|$;
    \item\label{item:markingconditions} define the union of marked elements denoted $\mathcal M$ as follows:
        \begin{itemize}
            \item if $\mathcal{E}_{N}(\overline{\varphi}_h) > 0.5 \mathcal{E}_{H^1}(\overline{\varphi}_{0,h})$ or $\mathcal{E}_{N}(\overline{\varphi}_{0,h}) > 0.5 \mathcal{E}_{H^1}(\overline{\varphi}_{0,h})$, add $\mathcal M_{\overline{\varphi}}\cup\mathcal M_{\overline{\varphi}_0}$ to $\mathcal M$,
            \item if $\mathcal{E}_{N}(z_{r,h}) > 0.5 \mathcal{E}_{H^1}(z_{0,r,h})$ or $\mathcal{E}_{N}(z_{0,r,h}) > 0.5 \mathcal{E}_{H^1}(z_{0,r,h})$, add $\mathcal M_z\cup\mathcal M_{z_0}$ to $\mathcal M$,
            \item if $\mathcal E_{\mathrm{DWR}}(\delta\overline{\varphi}_h) > 0.5 \mathcal{E}_J(z_{0,r,h},\overline{\varphi}_{0,h})$ or $E_r > 0.5 \mathcal{E}_J(z_{0,r,h},\overline{\varphi}_{0,h})$, add $\mathcal M_{\mathrm{DWR}}$ to $\mathcal M$;
        \end{itemize}
    \item if $\mathcal M$ is empty or the total number of degrees of freedom (DOFs) in $D^g_0$ exceeds 5 million, conclude. Otherwise, go to the next step;
    \item refine the elements in $\mathcal M$ to obtain the new (refined) mesh on $D^g_0$, and restrict the obtained mesh to the exact domain $D^g$ to obtain the new (refined) mesh on $D^g$;
    \item recompute the regularization parameter~$r$ from the updated mesh and go back to step~(a).
\end{enumerate}

\begin{remark}
    Since the goal-oriented defeaturing error estimator $\mathcal E_J$ is computed from $\mathcal E_{H^1}$ evaluated in the primal and in the dual solutions, the two first marking conditions in step~\ref{item:markingconditions} ensure that the numerical errors in the primal and dual solutions do not pollute the estimator $\mathcal E_J$. In addition, $\mathcal E_{\mathrm{DWR}}$ controls the second term in~\eqref{eq:decompositionerrorcontrol}, and the regularization error $E_r$ corresponds to the third term in~\eqref{eq:decompositionerrorcontrol}. The last marking criterion in step~\ref{item:markingconditions} thus ensures that the geometry simplification error dominates over both the numerical and regularization errors.
\end{remark}

For verification, we also compute the defeaturing errors $\left\|\overline{\E}_h-\overline{\E}_{0,h}\vert_{D^g}\right\|_{0,D^g}$, $\left\|z_h-z_{0,h}\vert_{H^1(D^g)}\right\|_{H^1(D^g)}$ and $|J_r(\overline{\varphi}_h)-J_r(\overline{\varphi}_{0,h}\vert_{D^g})|$ at every iteration, together with the inception voltage error $|\hat V_h-\hat V_{0,h}|$ and its non-regularized first-order approximation $|J(\overline{\varphi}_h)-J(\overline{\varphi}_{0,h}\vert_{D^g})|$.

\subsection{Numerical results}\label{ss:results}
In this section, we present the numerical results using the converged meshes and quantities obtained through the adaptive strategy described in Section~\ref{ss:adaptive}. In the first part, we assess the feature size dependence of the geometry defeaturing inception voltage error estimator $\mathcal{E}_J$ by fixing the shape of the feature and varying its size. In the second part, we assess the feature shape dependence of $\mathcal{E}_J$ by varying its aspect ratio. Indeed, the theory presented in Section~\ref{ss:estimatorfrom1storder} predicts that the effectivity index $\mathcal{I}_\mathrm{eff}$ should be constant with respect to the feature's size, but it does not provide any insight on the shape dependence of $\mathcal{I}_\mathrm{eff}$, which is investigated here numerically.

\subsubsection{Feature size dependence}\label{ss:sizenumtest}
In this numerical test, we consider $H=\displaystyle\frac{W}{2}$~mm for different values of $W=\displaystyle\frac{1}{2^k}$~mm, $k=0,1,2,3$. That is, we consider a family of features having the same shape but different sizes, parameterized by $k$. 

Figure~\ref{fig:1-size} reports the quantities directly involved in the goal-oriented inception voltage error estimation: panel~(a) compares the reference inception voltage defeaturing error $|\hat V_h-\hat V_{0,h}|$, its computable first-order approximation $|J_r(\delta\overline{\varphi}_h)|$, and the goal-oriented defeaturing estimator $\mathcal E_J(z_{0,r,h},\overline{\varphi}_{0,h})$; panel~(b) shows the corresponding effectivity index $\mathcal I_\mathrm{eff}$; panel~(c) decomposes $\mathcal E_J$ into the primal and dual $H^1$-defeaturing estimators $\mathcal E_{H^1}(\overline{\varphi}_{0,h})$ and $\mathcal E_{H^1}(z_{0,r,h})$ and the corrector contribution $|R(z_{0,r,h},\overline{\varphi}_{0,h})|$; and panel~(d) monitors the three terms entering the error decomposition~\eqref{eq:decompositionerrorcontrol} that must remain below~$|J_r(\delta\overline{\varphi}_h)|$, namely the DWR numerical-error estimator $\mathcal E_{\mathrm{DWR}}(\delta\overline{\varphi}_h)$ and the regularization error $E_r$.
Figure~\ref{fig:2-size} reports the auxiliary $H^1$-quantities feeding the goal-oriented estimator: the primal (left column) and dual (right column) $H^1$-defeaturing errors and their estimators (first row), the corresponding $H^1$-effectivity indices (second row), the residual $H^1$-numerical error estimators $\mathcal E_N$ on both the exact and the simplified geometries (third row), and the numerical convergence of the primal and dual $H^1$-defeaturing errors and estimators with respect to the number of degrees of freedom for $W=1$~mm (fourth row).

The results confirm both items~(i) and~(ii) stated above. First, $|J_r(\delta\overline{\varphi}_h)|$ approximates $|\hat V_h-\hat V_{0,h}|$ almost exactly across the different feature sizes, and both decay as $W^2$ when $W$ is halved. That is, the inception voltage error scales like the feature area, and the goal-oriented estimator $\mathcal E_J$ exhibits the same scaling and remains a tight upper bound. Second, the effectivity index $\mathcal I_\mathrm{eff}$ stays almost constant, between $1.03$ and $1.05$, as predicted by the theory presented in Section~\ref{ss:estimatorfrom1storder}. Panel~(c) further shows that $\mathcal E_J$ is largely dominated by the product of the primal and dual $H^1$-error estimators, while the corrector term $|R(z_{0,r,h},\overline{\varphi}_{0,h})|$ remains at machine-precision level (indeed, $V_D=0$ in this case). 

Finally, panel~(d) and the convergence study of Figure~\ref{fig:2-size} confirm that the adaptive strategy successfully drives the DWR numerical error $\mathcal E_{\mathrm{DWR}}$ and the regularization error $E_r$ well below $|J_r(\delta\overline{\varphi}_h)|$ for every $W$, so that the reported effectivity index reflects the geometry simplification error alone. The $H^1$-effectivity indices in Figure~\ref{fig:2-size} are themselves nearly constant ($\approx 1.34$), as expected from the literature, and they are remarkably the same for the primal and dual problems. Moreover, in the third row of Figure~\ref{fig:2-size}, we can see that the residual-based $H^1$-numerical error estimators $\mathcal E_N$ for the primal problems are driven to be about half an order of magnitude smaller than the corresponding $H^1$-defeaturing errors, as expected from the marking conditions in step~\ref{item:markingconditions} of the adaptive strategy. We observe however that this is not the case for the dual problem, for which $\mathcal E_N(z_{0,r,h})$ remains very large with respect to $\mathcal E_{H^1}(z_{0,r,h})$ across all values of $W$. This comes from the fact that the adaptive strategy has reached the forcing stopping criterion on the total number of degrees of freedom. The dual problem is indeed difficult to solve since $z_{0,r}$ exhibits a sharp layer around the critical part of the field line $\nu$. However, even though the numerical error in the dual solution is not fully under control, the per-DOF convergence panels in the last row confirm that the $H^1$-defeaturing errors and estimators have still reached their asymptotic plateau on the converged adaptive mesh for $W=1$~mm. Similar figures are obtained for the other values of $W$ (not reported here for conciseness), confirming that the effectivity indices are not polluted by the numerical errors.

\begin{figure}
\centering

\definecolor{magma1}{RGB}{45,17,96}
\definecolor{magma2}{RGB}{114,31,129}
\definecolor{magma3}{RGB}{182,54,121}
\definecolor{magma4}{RGB}{222,73,104}
\definecolor{magma5}{RGB}{251,133,28}
\definecolor{magma6}{RGB}{254,196,67}

\pgfplotstableread{
w inception_errors goal_defeaturing_errors goal_defeaturing_estimators goal_defeaturing_corrector_est goal_defeaturing_h1_est_primal goal_defeaturing_h1_est_dual dwr_est regularization_error_defeat eff_goal
1.2500000000e-01 1.7392457600e+00 1.7331024657e+00 1.7998202966e+00 5.7257233765e-16 1.1516620910e-02 1.5628024146e+02 6.0149201424e-04 1.3764108967e-02 1.0348280490e+00
2.5000000000e-01 6.9569063831e+00 6.9347799850e+00 7.2010878601e+00 2.9548398468e-15 2.3036203652e-02 3.1259872368e+02 4.6344563600e-03 5.5434161995e-02 1.0350991466e+00
5.0000000000e-01 2.7804410361e+01 2.7755683319e+01 2.8914294398e+01 4.3932153069e-15 4.6161830638e-02 6.2636801874e+02 3.2580146750e-02 2.2917284165e-01 1.0399175535e+00
1.0000000000e+00 1.1069748424e+02 1.1114443013e+02 1.1574371712e+02 7.9079422631e-15 9.2332585468e-02 1.2535522160e+03 9.3694713854e-02 9.4455784750e-01 1.0455857955e+00
}\datavswidth

\begin{tikzpicture}
\begin{groupplot}[
    group style={group size=2 by 2, horizontal sep=2.4cm, vertical sep=2.0cm},
    width=8cm, height=6cm,
    xmode=log,
    xlabel={Feature width $W$ [mm]},
    xticklabel style={/pgf/number format/fixed},
    grid=both, grid style={dashed,gray!30},
    legend cell align=left,
    legend style={font=\small, draw=none, fill=none},
    cycle list={
        {magma1},
        {magma2},
        {magma3},
        {magma4},
        {magma5},
        {magma6},
    },
]

\nextgroupplot[
    title={\textbf{(a)} Inception voltage defeaturing errors and estimators},
    ymode=log,
    ylabel={Error/Estimator [V]},
    legend pos=north west,
]
\addplot+[mark=square*,magma1] table[x=w, y=inception_errors]            {\datavswidth};
\addlegendentry{$\left|\hat V_h-\hat V_{0,h}\right|$}
\addplot+[mark=*,densely dotted,thick,magma4] table[x=w, y=goal_defeaturing_errors]     {\datavswidth};
\addlegendentry{$\left|J_r(\partial\overline{\varphi}_h)\right|$}
\addplot+[mark=diamond*,densely dashed,magma6,mark size=3] table[x=w, y=goal_defeaturing_estimators] {\datavswidth};
\addlegendentry{$\mathcal E_J(z_{0,r,h}, \overline{\varphi}_{0,h})$}

\nextgroupplot[
    title={\textbf{(b)} Effectivity index},
    ylabel={$\mathcal I_\mathrm{eff}$ [--]},
    ymode=normal,
    legend pos=north east,
    ymin=0.95, ymax=1.1,
]
\addplot+[mark=square*] table[x=w, y=eff_goal] {\datavswidth};
\addlegendentry{$\mathcal I_{\mathrm{eff}}$}

\nextgroupplot[
    title={\textbf{(c)} Defeaturing estimator components}, ymode=log, 
    ylabel={Components value [V]},
    legend pos=south east, legend style={yshift=20pt},
]
\addplot+[mark=square*,magma1] table[x=w, y=goal_defeaturing_estimators]    {\datavswidth};
\addlegendentry{$\mathcal E_J(z_{0,r,h}, \overline{\varphi}_{0,h})$}
\addplot+[mark=*,magma3] table[x=w, y=goal_defeaturing_h1_est_primal] {\datavswidth};
\addlegendentry{$\mathcal E_{H^1}(\overline{\varphi}_{0,h})$}
\addplot+[mark=triangle*,magma5,mark size=3] table[x=w, y=goal_defeaturing_h1_est_dual]   {\datavswidth};
\addlegendentry{$\mathcal E_{H^1}(z_{0,r,h})$}
\addplot+[mark=diamond*,magma6,mark size=3] table[x=w, y=goal_defeaturing_corrector_est] {\datavswidth};
\addlegendentry{$\left|R(z_{0,r,h}, \overline{\varphi}_{0,h})\right|$}

\nextgroupplot[
    title={\textbf{(d)} Total error components}, ymode=log,
    ylabel={Components value [V]},
    legend pos=south east,
]
\addplot+[mark=square*,magma1] table[x=w, y=goal_defeaturing_errors]     {\datavswidth};
\addlegendentry{$\left|J_r(\partial\overline{\varphi}_h)\right|$}
\addplot+[mark=*,magma4] table[x=w, y=dwr_est]                     {\datavswidth};
\addlegendentry{$\mathcal E_{\mathrm{DWR}}(\partial\overline{\varphi}_h)$}
\addplot+[mark=diamond*,magma6,mark size=3] table[x=w, y=regularization_error_defeat] {\datavswidth};
\addlegendentry{$E_r$}

\end{groupplot}
\end{tikzpicture}

\caption{Inception voltage defeaturing errors, estimators, and effectivity indices as functions of the feature width $W$ for a family of features of the same shape (but varying size).}\label{fig:1-size}
\end{figure}
\input{Figure2}

\subsubsection{Feature shape dependence}
In the two following numerical tests, we consider a family of features having different shapes, or more precisely different $\displaystyle\frac{H}{W}$ ratios, parameterized by $k=0,1,2,3$. In the first test, we fix the feature's width to $W=0.125$~mm and vary its height $H=\displaystyle\frac{1}{2^{k+1}}$~mm; then in the second test, we fix the feature's height to $H=1$~mm and vary its width $W=\displaystyle\frac{1}{2^{k}}$~mm.

The results of the first test are given in Figures~\ref{fig:1-height} and~\ref{fig:2-height}, while the results of the second test are given in Figure~\ref{fig:1-width} and~\ref{fig:2-width}. The four figures follow the same layout as Figures~\ref{fig:1-size} and~\ref{fig:2-size}.
For both tests, we observe that the goal-oriented estimator $\mathcal E_J$ remains a reliable upper bound for $|\hat V_h-\hat V_{0,h}|$, it approximates well the variations of the defeaturing error across the different shapes, and the panels~(d) and the convergence rows of Figures~\ref{fig:2-height} and~\ref{fig:2-width} confirm that the adaptive strategy keeps the DWR and regularization errors well below the defeaturing error, so that the computed effectivity indices remain a property of the geometry simplification alone.
However, in contrast with Section~\ref{ss:sizenumtest}, we observe that the effectivity index has now some dependence on the aspect ratio $\displaystyle\frac{H}{W}$. 

In the test varying the feature's height $H$ (Figure~\ref{fig:1-height}), as $H$ grows from $0.0625$ to $0.5$~mm at fixed~$W=0.125$~mm, the effectivity index $\mathcal I_\mathrm{eff}$ of the goal-oriented defeaturing error estimator increases monotonically from about~$1.03$ (anisotropic feature, $\displaystyle\frac{H}{W}=0.5$) up to about~$1.38$ (very thin, tall feature, $\displaystyle\frac{H}{W}=4$): that is, the estimator $\mathcal E_J$ becomes more conservative for tall and narrow protrusions. The effectivity index however remains very low and seems to saturate as $H$ grows, so that the deterioration of the bound is mild even for very elongated features. Moreover, remarkably, the $H^1$-effectivity indices in Figure~\ref{fig:2-height} exhibit the opposite trend, slightly decreasing from~$\approx 1.34$ towards~$\approx 1.23$ as $H$ grows.

The mirror situation is observed in the test varying the feature's width $W$ (Figure~\ref{fig:1-width}): as $W$ grows from $0.125$ to $1$~mm at fixed~$H=0.5$~mm, the effectivity index $\mathcal I_\mathrm{eff}$ of the goal-oriented defeaturing errorestimator decreases monotonically from about~$1.38$ (very thin, tall feature, $\displaystyle\frac{H}{W}=4$) down to about~$1.05$ (anisotropic feature, $\displaystyle\frac{H}{W}=0.5$). Thus, both experiments consistently identify the feature's aspect ratio (rather than its size) as the geometric parameter that controls the sharpness of the bound, with $\mathcal I_\mathrm{eff}$ close to $1$ as the feature becomes anisotropic, but only with a mild deterioration for very elongated features. 

We also observe that the inception voltage defeaturing error itself is far more sensitive to~$H$ than to~$W$. Indeed, it grows by almost two orders of magnitude when the height is multiplied by $8$, but only by~$\sim 1.4$ when the width is increased by the same factor. This is in line with the physical intuition that the field enhancement produced by the protrusion is primarily governed by its height.
\begin{figure}
\centering

\definecolor{magma1}{RGB}{45,17,96}
\definecolor{magma2}{RGB}{114,31,129}
\definecolor{magma3}{RGB}{182,54,121}
\definecolor{magma4}{RGB}{222,73,104}
\definecolor{magma5}{RGB}{251,133,28}
\definecolor{magma6}{RGB}{254,196,67}

\pgfplotstableread{
w inception_errors goal_defeaturing_errors goal_defeaturing_estimators goal_defeaturing_corrector_est goal_defeaturing_h1_est_primal goal_defeaturing_h1_est_dual dwr_est regularization_error_defeat eff_goal
0.62500000000e-01 1.7392457343e+00 1.7331024677e+00 1.7998202965e+00 3.3652179116e-16 1.1516620910e-02 1.5628024145e+02 6.0149201569e-04 1.3764070753e-02 1.0348280643e+00
1.2500000000e-01 5.7251640761e+00 5.7063219873e+00 6.9022285043e+00 9.4709847529e-16 2.2531345198e-02 3.0633894442e+02 5.2349215693e-03 3.4706674565e-02 1.2055948812e+00
2.5000000000e-01 2.0547025101e+01 2.0500324434e+01 2.7111668293e+01 8.0645500435e-15 4.4653320608e-02 6.0715906284e+02 3.9800792875e-02 1.2793040866e-01 1.3194936084e+00
0.5000000000e+00 7.7409085011e+01 7.7538046071e+01 1.0710399365e+02 1.1641429981e-14 8.8700015202e-02 1.2074856290e+03 1.1805006311e-01 4.6654014496e-01 1.3836101232e+00
}\dataheightonlyvswidth

\begin{tikzpicture}
\begin{groupplot}[
    group style={group size=2 by 2, horizontal sep=2.4cm, vertical sep=2.0cm},
    width=8cm, height=6cm,
    xmode=log,
    xlabel={Feature height $H$ [mm]},
    xticklabel style={/pgf/number format/fixed},
    grid=both, grid style={dashed,gray!30},
    legend cell align=left,
    legend style={font=\small, draw=none, fill=none},
    cycle list={
        {magma1},
        {magma2},
        {magma3},
        {magma4},
        {magma5},
        {magma6},
    },
]

\nextgroupplot[
    title={\textbf{(a)} Inception voltage defeaturing errors and estimators},
    ymode=log,
    ylabel={Error/Estimator [V]},
    legend pos=north west,
]
\addplot+[mark=square*,magma1] table[x=w, y=inception_errors]            {\dataheightonlyvswidth};
\addlegendentry{$\left|\hat V_h-\hat V_{0,h}\right|$}
\addplot+[mark=*,densely dotted,thick,magma4] table[x=w, y=goal_defeaturing_errors]     {\dataheightonlyvswidth};
\addlegendentry{$\left|J_r(\partial\overline{\varphi}_h)\right|$}
\addplot+[mark=diamond*,densely dashed,magma6,mark size=3] table[x=w, y=goal_defeaturing_estimators] {\dataheightonlyvswidth};
\addlegendentry{$\mathcal E_J(z_{0,r,h}, \overline{\varphi}_{0,h})$}

\nextgroupplot[
    title={\textbf{(b)} Effectivity index},
    ylabel={$\mathcal I_\mathrm{eff}$ [--]},
    ymode=normal,
    legend pos=north west
]
\addplot+[mark=square*] table[x=w, y=eff_goal] {\dataheightonlyvswidth};
\addlegendentry{$\mathcal I_{\mathrm{eff}}$}

\nextgroupplot[
    title={\textbf{(c)} Defeaturing estimator components}, ymode=log, 
    ylabel={Components value [V]},
    legend pos=south east, legend style={yshift=20pt},
]
\addplot+[mark=square*,magma1] table[x=w, y=goal_defeaturing_estimators]    {\dataheightonlyvswidth};
\addlegendentry{$\mathcal E_J(z_{0,r,h}, \overline{\varphi}_{0,h})$}
\addplot+[mark=*,magma3] table[x=w, y=goal_defeaturing_h1_est_primal] {\dataheightonlyvswidth};
\addlegendentry{$\mathcal E_{H^1}(\overline{\varphi}_{0,h})$}
\addplot+[mark=triangle*,magma5,mark size=3] table[x=w, y=goal_defeaturing_h1_est_dual]   {\dataheightonlyvswidth};
\addlegendentry{$\mathcal E_{H^1}(z_{0,r,h})$}
\addplot+[mark=diamond*,magma6,mark size=3] table[x=w, y=goal_defeaturing_corrector_est] {\dataheightonlyvswidth};
\addlegendentry{$\left|R(z_{0,r,h}, \overline{\varphi}_{0,h})\right|$}

\nextgroupplot[
    title={\textbf{(d)} Total error components}, ymode=log,
    ylabel={Components value [V]},
    legend pos=south east,
]
\addplot+[mark=square*,magma1] table[x=w, y=goal_defeaturing_errors]     {\dataheightonlyvswidth};
\addlegendentry{$\left|J_r(\partial\overline{\varphi}_h)\right|$}
\addplot+[mark=*,magma4] table[x=w, y=dwr_est]                     {\dataheightonlyvswidth};
\addlegendentry{$\mathcal E_{\mathrm{DWR}}(\partial\overline{\varphi}_h)$}
\addplot+[mark=diamond*,magma6,mark size=3] table[x=w, y=regularization_error_defeat] {\dataheightonlyvswidth};
\addlegendentry{$E_r$}

\end{groupplot}
\end{tikzpicture}

\caption{Inception voltage defeaturing errors, estimators, and effectivity indices as functions of the feature height $H$ for a family of features of varying shape (fixed width, varying height).}\label{fig:1-height}
\end{figure}
\input{figure2-height-only}
\begin{figure}
\centering

\definecolor{magma1}{RGB}{45,17,96}
\definecolor{magma2}{RGB}{114,31,129}
\definecolor{magma3}{RGB}{182,54,121}
\definecolor{magma4}{RGB}{222,73,104}
\definecolor{magma5}{RGB}{251,133,28}
\definecolor{magma6}{RGB}{254,196,67}

\pgfplotstableread{
w inception_errors goal_defeaturing_errors goal_defeaturing_estimators goal_defeaturing_corrector_est goal_defeaturing_h1_est_primal goal_defeaturing_h1_est_dual dwr_est regularization_error_defeat eff_goal
1.2500000000e-01 7.7409085217e+01 7.7538046427e+01 1.0710399377e+02 2.8529357808e-15 8.8700015220e-02 1.2074856301e+03 1.1805009125e-01 4.6654022549e-01 1.3836101211e+00
2.5000000000e-01 8.1942845582e+01 8.2106519297e+01 1.0783967986e+02 2.5114349884e-14 8.9098451136e-02 1.2103429239e+03 1.1051996454e-01 6.6594698193e-01 1.3160353304e+00
5.0000000000e-01 9.1245439971e+01 9.1487201072e+01 1.1033919744e+02 1.1322195039e-14 9.0143895625e-02 1.2240340478e+03 9.9527875558e-02 7.7686866838e-01 1.2092571144e+00
1.0000000000e+00 1.1069748421e+02 1.1114443066e+02 1.1574371715e+02 1.0979200757e-14 9.2332585468e-02 1.2535522163e+03 9.3694713674e-02 9.4455786264e-01 1.0455857960e+00
}\datawidthonlyvswidth

\begin{tikzpicture}
\begin{groupplot}[
    group style={group size=2 by 2, horizontal sep=2.4cm, vertical sep=2.0cm},
    width=8cm, height=6cm,
    xmode=log,
    xlabel={Feature width $W$ [mm]},
    xticklabel style={/pgf/number format/fixed},
    grid=both, grid style={dashed,gray!30},
    legend cell align=left,
    legend style={font=\small, draw=none, fill=none},
    cycle list={
        {magma1},
        {magma2},
        {magma3},
        {magma4},
        {magma5},
        {magma6},
    },
]

\nextgroupplot[
    title={\textbf{(a)} Inception voltage defeaturing errors and estimators},
    ymode=log,
    ylabel={Error/Estimator [V]},
    ymin=10^(1.5), ymax=10^(2.5),
    legend pos=south east
]
\addplot+[mark=square*,magma1] table[x=w, y=inception_errors]            {\datawidthonlyvswidth};
\addlegendentry{$\left|\hat V_h-\hat V_{0,h}\right|$}
\addplot+[mark=*,densely dotted,thick,magma4] table[x=w, y=goal_defeaturing_errors]     {\datawidthonlyvswidth};
\addlegendentry{$\left|J_r(\partial\overline{\varphi}_h)\right|$}
\addplot+[mark=diamond*,densely dashed,magma6,mark size=3] table[x=w, y=goal_defeaturing_estimators] {\datawidthonlyvswidth};
\addlegendentry{$\mathcal E_J(z_{0,r,h}, \overline{\varphi}_{0,h})$}

\nextgroupplot[
    title={\textbf{(b)} Effectivity index},
    ylabel={$\mathcal I_\mathrm{eff}$ [--]},
    ymode=normal,
    legend pos=north east
]
\addplot+[mark=square*] table[x=w, y=eff_goal] {\datawidthonlyvswidth};
\addlegendentry{$\mathcal I_{\mathrm{eff}}$}

\nextgroupplot[
    title={\textbf{(c)} Defeaturing estimator components}, ymode=log, 
    ylabel={Components value [V]},
    legend pos=south east, legend style={yshift=20pt},
]
\addplot+[mark=square*,magma1] table[x=w, y=goal_defeaturing_estimators]    {\datawidthonlyvswidth};
\addlegendentry{$\mathcal E_J(z_{0,r,h}, \overline{\varphi}_{0,h})$}
\addplot+[mark=*,magma3] table[x=w, y=goal_defeaturing_h1_est_primal] {\datawidthonlyvswidth};
\addlegendentry{$\mathcal E_{H^1}(\overline{\varphi}_{0,h})$}
\addplot+[mark=triangle*,magma5,mark size=3] table[x=w, y=goal_defeaturing_h1_est_dual]   {\datawidthonlyvswidth};
\addlegendentry{$\mathcal E_{H^1}(z_{0,r,h})$}
\addplot+[mark=diamond*,magma6,mark size=3] table[x=w, y=goal_defeaturing_corrector_est] {\datawidthonlyvswidth};
\addlegendentry{$\left|R(z_{0,r,h}, \overline{\varphi}_{0,h})\right|$}

\nextgroupplot[
    title={\textbf{(d)} Total error components}, ymode=log,
    ylabel={Components value [V]},
    legend pos=south west, legend style={yshift=40pt}
]
\addplot+[mark=square*,magma1] table[x=w, y=goal_defeaturing_errors]     {\datawidthonlyvswidth};
\addlegendentry{$\left|J_r(\partial\overline{\varphi}_h)\right|$}
\addplot+[mark=*,magma4] table[x=w, y=dwr_est]                     {\datawidthonlyvswidth};
\addlegendentry{$\mathcal E_{\mathrm{DWR}}(\partial\overline{\varphi}_h)$}
\addplot+[mark=diamond*,magma6,mark size=3] table[x=w, y=regularization_error_defeat] {\datawidthonlyvswidth};
\addlegendentry{$E_r$}

\end{groupplot}
\end{tikzpicture}

\caption{Inception voltage defeaturing errors, estimators, and effectivity indices as functions of the feature width $W$ for a family of features of varying shape (fixed height, varying width).}\label{fig:1-width}
\end{figure}
\input{figure2-width-only}

\section{Conclusion and outlook}\label{s:conclusion}
In this work, we presented a goal-oriented \textit{a posteriori} error estimation framework for assessing geometry simplification errors in electrostatic breakdown simulations, with a focus on inception voltage computations. The methodology combines a first-order approximation of the inception voltage error in terms of a linear functional~$J$ of the background electric field whose lack of regularity requires a rigorous treatment, with the recently proposed certified goal-oriented analysis-aware defeaturing estimator of~\cite{weder2025certified}. The theoretical framework, built upon the streamer integral model and goal-oriented error estimation theory, provides rigorous error bounds which are explicit in the feature's size. The proposed estimator requires computing quantities only in the simplified geometry and along the boundary of the removed feature, avoiding the need to mesh or solve the full detailed geometry.

The methodology was illustrated on a two-dimensional pin--plate benchmark with a small protrusion. To verify the reliability of the computed effectivity index, a single shared adaptive mesh and an adaptive coupling of the mollification width to the local mesh size were used to ensure that, on the converged mesh, the defeaturing error dominates over both the discretization and the regularization errors. Within this controlled setting, the numerical experiments demonstrated the following:
\begin{itemize}
    \item The first-order approximation $|J_r(\partial\overline{\varphi}_h)|$ tracks the reference inception voltage defeaturing error $|\hat V_h-\hat V_{0,h}|$ almost exactly across all considered feature sizes and shapes, confirming the validity of the first-order approximation;
    \item The goal-oriented defeaturing error estimator $\mathcal E_J$ remains a reliable upper bound with an effectivity index that stays low and of order unity (between about~$1.03$ and~$1.4$) across the entire family of considered features;
    \item When the feature shape is fixed and only its size is varied, the effectivity index remains constant, in agreement with the theory;
    \item When the feature's aspect ratio $\frac{H}{W}$ is varied, the effectivity index depends mildly but visibly on the shape, deteriorating for elongated features. This provides numerical insights into the otherwise unquantified shape-dependence of the constants in the error bound. 
    \item The inception voltage error is far more sensitive to the feature height than to its width, in line with the physical role of height in the local electric field enhancement.
\end{itemize}

This study is a first step towards the application of analysis-aware defeaturing to inception voltage predictions, and thus operates under quite restrictive assumptions. Several research directions therefore naturally arise: 
\begin{itemize}
    \item The assumption that the inception point coincides on the exact and on the simplified geometries is strong and should be relaxed, and a more general defeaturing error estimator should be derived without this assumption;
    \item Extensions to three-dimensional multi-electrode configurations with insulators and floating conductors are necessary for industrial applications; 
    \item The methodology should be validated on a broader range of geometric features and configurations to establish its robustness and practical applicability (e.g. scalability with the number of features and other application specific requirements). In particular, the regime of validity of the first-order approximation should be further investigated (for instance in configurations where the feature is not aligned with the opposite electrode, leading to distorted field lines), and the potential of higher-order approximations should be explored;
    \item A change of insulator geometry does not mathematically fall within the geometry simplification setting analyzed here since the computational domain in which Gauss' law~\eqref{eq:poissonmain} is solved is not modified. In this case, what changes is the definition of the permittivity~$\varepsilon$; we could therefore treat this situation as a parameter perturbation problem.
\end{itemize}

\appendix
\section{Proofs}\label{appendix}
\begin{lemma} \label{lemma:subharmonicgradient}
Let $\varphi:D\to\R$ be a $\mathcal C^3(D)$-harmonic function over some domain $D\subset\R^3$, i.e., $\Delta \varphi = 0$ in $D$. Then $|\nabla \varphi|$ is a sub-harmonic function over $D$, i.e., $\Delta(|\nabla \varphi|) \geq 0$ in $D$.
\end{lemma}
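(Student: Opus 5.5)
The plan is to compute $\Delta |\nabla\varphi|$ directly on the open set where $\nabla\varphi\neq 0$ and show it is nonnegative via a Cauchy--Schwarz (Kato-type) inequality. On the critical set where $\nabla\varphi=0$ we instead use the sub-mean-value characterization of subharmonicity, since $|\nabla\varphi|$ need not be twice differentiable there.

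\textbf{Step 1: Bochner identity for $|\nabla\varphi|^2$.} Write $g:=|\nabla\varphi|^2=\sum_i(\partial_i\varphi)^2$. Since $\varphi\in\mathcal C^3(D)$, we get $\Delta g = 2\sum_{i,j}(\partial_{ij}\varphi)^2 + 2\sum_i \partial_i\varphi\,\partial_i(\Delta\varphi)$. The last term vanishes because $\Delta\varphi=0$, so $\Delta g = 2|D^2\varphi|^2$, where $|D^2\varphi|$ is the Frobenius norm of the Hessian.

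\textbf{Step 2: pass to $u:=|\nabla\varphi|=\sqrt g$ where $u>0$.} On the open set $\{u>0\}$, $u$ is $\mathcal C^2$. From $\Delta g = \Delta(u^2)=2u\Delta u+2|\nabla u|^2$ we obtain
\begin{equation}
u\,\Delta u = |D^2\varphi|^2 - |\nabla u|^2 .
\end{equation}
Moreover $\nabla u = D^2\varphi\,\nabla\varphi/|\nabla\varphi|$, so Cauchy--Schwarz gives $|\nabla u|\le \|D^2\varphi\|_{\mathrm{op}}\le|D^2\varphi|$. Hence $\Delta u\ge 0$ wherever $u>0$. This already shows that $u$ is subharmonic there, and the inequality $\Delta(|\nabla\varphi|)\geq 0$ holds classically on that set. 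One could sharpen this to the refined Kato inequality $|\nabla u|^2\le\frac{2}{3}|D^2\varphi|^2$ using tracelessness of the Hessian, but this is not needed.

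\textbf{Step 3: the critical set (the main obstacle).} Where $\nabla\varphi=0$, $u$ is only Lipschitz, so $\Delta u\ge0$ must be understood in the distributional or sub-mean-value sense. The cleanest route avoids the critical set altogether. Each component $\partial_i\varphi$ is harmonic, so for any fixed unit vector $\mathbf e$ the function $\mathbf e\cdot\nabla\varphi$ is harmonic. Writing
\begin{equation}
u=\sup_{|\mathbf e|=1}\mathbf e\cdot\nabla\varphi
\end{equation}
exhibits $u$ as a supremum of harmonic functions. Since $u$ is continuous, it is subharmonic: for every ball $B(\x,\rho)\subset D$ and every $\mathbf e$ we have $\mathbf e\cdot\nabla\varphi(\x)$ equal to its mean over $B(\x,\rho)$, which is at most the mean of $u$ over $B(\x,\rho)$. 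Taking the supremum over $\mathbf e$ yields the sub-mean-value property.

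This argument proves the lemma everywhere in the weak sense, and Step 2 gives the classical statement on $\{\nabla\varphi\ne0\}$. I would present Step 3 as the main proof, because it is short and covers the degenerate points, and keep Steps 1--2 as the pointwise computation matching the paper's formulation $\Delta(|\nabla\varphi|)\ge0$. The maximum-principle use in Section~\ref{ss:streamerintegralmodel} only requires the sub-mean-value property, so the weak sense suffices there.
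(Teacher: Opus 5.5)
Your argument is correct, and it departs from the paper exactly at the step that carries the weight. The paper carries out only your Step~1, obtaining $\Delta(|\nabla\varphi|^2)=2\sum_{i,j}(\partial_{ij}\varphi)^2\ge 0$. It then passes to $|\nabla\varphi|$ on the grounds that the square root is strictly increasing. Monotonicity alone does not transfer subharmonicity: you need an increasing \emph{convex} function, and $\sqrt{\cdot}$ is concave. For instance, $x_1$ is harmonic on $\{x_1>0\}$, yet $\Delta\sqrt{x_1}=-\tfrac14 x_1^{-3/2}<0$. So the paper's last line is a gap. On $\{\nabla\varphi\neq 0\}$, the missing ingredient is precisely your Kato-type bound $|\nabla u|\le\|D^2\varphi\|_{\mathrm{op}}\le|D^2\varphi|$ for $u=|\nabla\varphi|$, inserted into $u\,\Delta u=|D^2\varphi|^2-|\nabla u|^2$.

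Your Step~3 then gives more than the paper attempts. Writing $u=\sup_{|\mathbf e|=1}\mathbf e\cdot\nabla\varphi$ as a supremum of harmonic functions yields the sub-mean-value property on all of $D$. This includes the critical set, where $u$ is in general only Lipschitz and the pointwise inequality of the lemma need not make classical sense. It also needs no $\mathcal C^3$ hypothesis, since harmonic functions are smooth.

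What the paper's monotonicity remark does legitimately give is the maximum principle. $|\nabla\varphi|^2$ is a smooth subharmonic function, so (on a bounded domain) its maximum over $\overline{D}$ is attained on $\partial D$, and hence so is the maximum of the increasing transform $|\nabla\varphi|$. That is the only use of the lemma in Section~\ref{ss:streamerintegralmodel}, so the downstream claim survives. The lemma as stated, however, requires an argument like yours rather than the paper's.
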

\begin{proof}
    Let us start by writing the expression of $|\nabla \varphi|^2$ explicitly:
    \begin{align}
        |\nabla \varphi|^2 = \sum_{i=1}^3 \left(\frac{\partial \varphi}{\partial x_i}\right)^2.
    \end{align}
    Then, for all $j\in\{1,2,3\}$,
    \begin{align}
        \frac{\partial}{\partial x_j}\left(|\nabla \varphi|^2\right) &= 2\sum_{i=1}^3 \frac{\partial \varphi}{\partial x_i}\frac{\partial^2 \varphi}{\partial x_j\partial x_i},\\
        \text{and thus } \frac{\partial^2}{\partial x_j^2}\left(|\nabla \varphi|^2\right) &= 2\sum_{i=1}^3\left[ \left(\frac{\partial^2 \varphi}{\partial x_j\partial x_i}\right)^2 + \frac{\partial \varphi}{\partial x_i}\frac{\partial}{\partial x_i}\left(\frac{\partial^2\varphi}{\partial^2 x_j}\right) \right].
    \end{align}
    Therefore, since $\varphi$ is harmonic,
    \begin{equation}
        \Delta(|\nabla \varphi|^2) = 2 \sum_{i=1}^3 \left[\sum_{j=1}^3 \left(\frac{\partial^2 \varphi}{\partial x_j\partial x_i}\right)^2 + \frac{\partial \varphi}{\partial x_i}\frac{\partial}{\partial x_i}\left(\Delta \varphi\right)\right] = 2 \sum_{i,j=1}^3 \left(\frac{\partial^2 \varphi}{\partial x_j\partial x_i}\right)^2 \geq 0.
    \end{equation}
    Consequently, $|\nabla \varphi|^2$ is sub-harmonic in $D$, and since the square root is a strictly increasing function in $[0,\infty)$, then $|\nabla \varphi|$ is also sub-harmonic in $D$.
\end{proof}

\vspace{1cm}
\begin{proof}[Proof of Proposition~\ref{prop:1storderapprox}]
In this proof, let us denote $\left\langle \displaystyle\frac{\mathrm dF}{\mathrm d\E}(\E), \nabla\delta\overline{\varphi}\right\rangle$ instead of $F'(\E)\cdot\nabla\delta\overline\varphi$ the G\^ateaux derivative of any functional $F\in \left(\left[L^2(D^g)\right]^3\right)^* \cong \left[L^2(D^g)\right]^3$ (up to isomorphism) in direction $\nabla\delta\overline\varphi$, evaluated at some electric field $\E\in\left[L^2(D^g)\right]^3$.

In equation~\eqref{eq:defomegahat}, $\hat\omega$ is defined by minimizing over the value of the streamer integral~$S$. This quantity depends on multiple arguments, all of which implicitly depend on the underlying background electric field. Let us therefore introduce the reduced streamer integral functional $\Sigma$ evaluated in the inception scaling $\hat\omega$: for all $\E\in\left[L^2(D^g)\right]^3$, let
\begin{equation}
    \Sigma(\E) := S\Big( \hat\omega(\E), \E, \nu, \hat T\big(\hat\omega(\E), \E\big)\Big).
\end{equation}
From~\eqref{eq:defomegahat}, we know that $\Sigma(\E) = K_c$ for all background electric fields $\E$. This means that for all~$\delta\E\in\left[L^2(D^g)\right]^3$, 
\begin{equation}
    \left\langle \frac{\mathrm d\Sigma}{\mathrm d\E}(\E), \delta\E\right\rangle = 0.
\end{equation}
This is thus in particular true for $\E = \overline{\E}_0$ and for $\delta\E=\nabla\delta\overline{\varphi}$,
\begin{equation}\label{eq:dSdFiszero}
    \left\langle \frac{\mathrm d\Sigma}{\mathrm d\E}(\overline{\E}_0), \nabla\delta\overline{\varphi}\right\rangle = 0.
\end{equation}

Using the assumption that $\hat T_0$ is differentiable at $\overline{\E}_0$ in the direction~$\nabla\delta\overline{\varphi}$, let us then further develop equation~\eqref{eq:dSdFiszero}. Recalling the streamer integral definition~\eqref{eq:defS}, and using Leibniz integral rule,
\begin{align}
    0 = &\,\left\langle \frac{\mathrm d\Sigma}{\mathrm d\E}(\overline{\E}_0), \nabla\delta\overline{\varphi}\right\rangle\\
    = &\, \left\langle \frac{\mathrm d}{\mathrm d\E}\left(\int_0^{\hat T_0} \alpha_\mathrm{eff}\Big(\hat\omega(\overline{\E}_0)\big|\overline{\E}_0\big(\x_0(t)\big)\big|\Big)\,\mathrm dt \right), \nabla\delta\overline{\varphi}\right\rangle\\
    = &\, \left\langle \frac{\mathrm d\hat T_0}{\mathrm d\E}, \nabla\delta\overline{\varphi}\right\rangle \alpha_\mathrm{eff}\Big(\hat\omega(\overline{\E}_0)\big|\overline{\E}_0\big(\x_0(\hat T_0)\big)\big|\Big) + \int_0^{\hat T_0} \Bigg\langle \frac{\mathrm d}{\mathrm d\E}\left(\alpha_\mathrm{eff}\Big(\hat\omega(\overline{\E}_0)\big|\overline{\E}_0\big(\x_0(t)\big)\big|\Big) \Bigg), \nabla\delta\overline{\varphi}\right\rangle\,\mathrm dt.\label{eq:sumequalszero}
\end{align}
From Assumption~\ref{as:allassumptions}.\ref{as:highlynonhomog}, $\alpha_\mathrm{eff}\Big(\hat\omega(\overline{\E}_0)\big|\overline{\E}_0\big(\x_0(\hat T_0)\big)\big|\Big)=0$, and thus the first term vanishes. Moreover, from Section~\ref{ss:streamerintegralmodel}, we know that $\alpha_\mathrm{eff}$ is differentiable in all points where it is not zero, i.e., for every value above the critical field~$E_c$. Since $\hat T_0\in(0,T^c]$, recalling the definition of $T^c$ from~\eqref{eq:Tmax}, then almost everywhere in the interval $(0,\hat T_0)$ (i.e., except on a subset of measure zero), $\alpha_\mathrm{eff}$ is differentiable and $\big|\E_0\big(\x_0(t)\big)\big|\neq 0$. Consequently, using the chain rule and the product rule,
\begin{align}
    0 = &\int_0^{\hat T_0} \Bigg\langle \frac{\mathrm d}{\mathrm d\E}\left(\alpha_\mathrm{eff}\Big(\hat\omega(\overline{\E}_0)\big|\overline{\E}_0\big(\x_0(t)\big)\big|\Big) \Bigg), \nabla\delta\overline{\varphi}\right\rangle\,\mathrm dt\\
    = &\int_0^{\hat T_0} \alpha_\mathrm{eff}'\Big(\hat\omega(\overline{\E}_0)\big|\overline{\E}_0\big(\x_0(t)\big)\big|\Big)\left[ \left\langle\frac{\mathrm d\hat\omega}{\mathrm d\E}(\overline{\E}_0), \nabla\delta\overline{\varphi}\right\rangle \big|\overline{\E}_0\big(\x_0(t)\big)\big| + \hat\omega(\overline{\E}_0) \left\langle\frac{\mathrm d}{\mathrm d\E}\left( \big|\overline{\E}_0\big(\x_0(t)\big)\big| \right), \nabla\delta\overline{\varphi}\right\rangle \right] \,\mathrm dt\\
    = &\left\langle\frac{\mathrm d\hat\omega}{\mathrm d\E}(\overline{\E}_0), \nabla\delta\overline{\varphi}\right\rangle \int_0^{\hat T_0} \alpha_\mathrm{eff}'\Big(\hat\omega(\overline{\E}_0)\big|\overline{\E}_0\big(\x_0(t)\big)\big|\Big) \big|\overline{\E}_0\big(\x_0(t)\big)\big|\,\mathrm dt \\
    &+ \hat\omega(\overline{\E}_0) \int_0^{\hat T_0} \alpha_\mathrm{eff}'\Big(\hat\omega(\overline{\E}_0)\big|\overline{\E}_0\big(\x_0(t)\big)\big|\Big) \frac{\overline{\E}_0\big(\x_0(t)\big) \cdot \nabla\delta\overline{\varphi}\big(\x_0(t)\big)}{\big|\overline{\E}_0\big(\x_0(t)\big)\big|} \,\mathrm d t. \label{eq:secondterm}
\end{align}
Note that the computation of the second term in the last step relies on Assumption~\ref{as:allassumptions},\ref{as:notdistortedfieldlines}, implying that $\x_0$ is independent from the considered electric fields $\E$ (i.e., the ones whose field line originating at $\x^\star$ is $\nu$).

Therefore, 
\begin{align}
    &-\left\langle\frac{\mathrm d\hat\omega}{\mathrm d\E}(\overline{\E}_0), \nabla\delta\overline{\varphi}\right\rangle \int_0^{\hat T_0} \alpha_\mathrm{eff}'\Big(\hat\omega(\overline{\E}_0)\big|\overline{\E}_0\big(\x_0(t)\big)\big|\Big) \big|\overline{\E}_0\big(\x_0(t)\big)\big|\,\mathrm dt \\
    = \,&\hat\omega(\overline{\E}_0) \int_0^{\hat T_0} \alpha_\mathrm{eff}'\Big(\hat\omega(\overline{\E}_0)\big|\overline{\E}_0\big(\x_0(t)\big)\big|\Big) \frac{\overline{\E}_0\big(\x_0(t)\big) \cdot \nabla\delta\overline{\varphi}\big(\x_0(t)\big)}{\big|\overline{\E}_0\big(\x_0(t)\big)\big|} \,\mathrm d t. \label{eq:lastone}
\end{align}
From Section~\ref{ss:streamerintegralmodel}, we recall that for all $\xi>E_c$, $\alpha_\mathrm{eff}'(\xi)>0$. Therefore, using again the definition of $\hat T_0$ and the fact that $\big|\E_0\big(\x_0(t)\big)\big|\neq 0$ almost everywhere in the interval $(0,\hat T_0)$, then we know that the integral on the left hand side of~\eqref{eq:lastone} is strictly positive. We can thus divide both sides by this quantity to obtain the result of Proposition~\ref{prop:1storderapprox}.

Finally, remark that the steps in this proof also demonstrate that all G\^ateaux derivatives involved exist, except for $\hat T_0$ whose differentiability is assumed.
\end{proof}

\vspace{1cm}
\begin{proof}[Proof of Proposition~\ref{prop:regularityJ}]
Note that from the assumptions on the effective ionization coefficient $\alpha_\mathrm{eff}$ from Section~\ref{ss:streamerintegralmodel}, and from the regularity of $\overline{\E}_0$, $\kappa(t)$ is bounded on the critical portion
\begin{equation}
    \hat\nu:=\displaystyle\left\{\x_0(t)\in D^g_0:0<t<\hat T_0\right\}=\displaystyle\left\{\x_0(t)\in D^g:0<t<\hat T_0\right\}
\end{equation}
of the field line. The last inequality is a consequence of Assumption~\ref{as:allassumptions}.\ref{as:hatomegaambiguity}.

Hence $J(\delta\varphi)$ is the integral, along the open curve $\hat\nu$, of $\nabla\delta\varphi$ tested against the smooth tangential weight~$\kappa(t)\mathbf{t}$. This evaluation makes sense provided the trace of $\nabla\delta\varphi$ on the curve~$\hat\nu$ exists in $L^2(\hat\nu)$. In $\mathbb{R}^3$, the restriction to a smooth curve (a manifold of codimension~$2$) is bounded from $H^{s}(D^g)$ to $L^2(\hat\nu)$ for any $s>1$, see e.g. ~\cite{adams2003sobolev,lions1972nonhomogeneous}. And in order to have $\nabla\delta\varphi\in H^{1+\epsilon}(D^g)$ for some $\epsilon>0$, we require $\delta\varphi\in H^{2+\epsilon}(D^g)$. Therefore, $J$ is well-defined as a linear functional on $H^{2+\epsilon}(D^g)$, i.e., $J\in H^{-2-\epsilon}(D^g)$.
\end{proof}

\section*{Acknowledgments}
The author would like to thank Dr. Christoph Winkelmann and Dr. Giacomo Garegnani for the many insightful discussions and valuable feedback throughout the development of this work.

\section*{Declaration of generative AI and AI-assisted technologies in the manuscript preparation process}
During the preparation of this work the author used Claude, a large language model trained by Anthropic, to help with code generation and to improve the readability and language of the manuscript. After using this tool, the author reviewed and edited the content as needed and takes full responsibility for the content of this publication.

\section*{Declaration of interest statement}
The author is an employee of ABB, a company that designs and manufactures electrical equipment relevant to the simulation methods presented in this work. The author declares no other competing financial interests or personal relationships that could have appeared to influence the work reported in this paper.

\addcontentsline{toc}{chapter}{Bibliography}
\bibliography{refs}
\bibliographystyle{ieeetr}

\end{document}